\theoremstyle{plain}
\newtheorem{theorem}{Theorem}[section]
 \newtheorem{corollary}[theorem]{Corollary}
 \newtheorem{proposition}[theorem]{Proposition}
 \newtheorem{lemma}[theorem]{Lemma}
\newtheorem{remark}{Remark}[section]
 \def\beqlb{\begin{eqnarray}}\def\eeqlb{\end{eqnarray}}
 \def\beqnn{\begin{eqnarray*}}\def\eeqnn{\end{eqnarray*}}
 \def\qed{\hfill$\Box$\medskip}
\newcommand{\bcen}{\begin{center}}
\newcommand{\ecen}{\end{center}}
\newcommand{\bgeqn}{\begin{equation}}
\newcommand{\edeqn}{\end{equation}}
\begin{document}

\title{Limit theorems for the critical Galton-Watson processes with  immigration stopped at zero}

\author[a]{Doudou Li \thanks{lidd@bjut.edu.cn}}
\author[b]{Mei Zhang \thanks{Corresponding author: meizhang@bnu.edu.cn.}}
\author[c]{Xianyu Zhang\thanks{xyzhang1995@126.com}}
\affil[a]{College of Statistics and Data Science, Faculty of Science, Beijing University of Technology,
 Beijing 100124, China}
\affil[b,c]{Laboratory of Mathematics and Complex Systems (Ministry of Education), School of Mathematical Sciences, Beijing Normal University, Beijing 100875, China}

\maketitle

\noindent\textit{Abstract:}
In this paper, we consider a critical Galton-Watson branching process with immigration stopped at zero $\mathbf{W}$. Some precise estimation on the generation function of the $n$-th population are obtained, and the  tail probability of the   life period of $\mathbf{W}$ is studied. Based on above results, two conditional limit theorems for $\mathbf{W}$ are established.
\bigskip

\noindent\textit{Mathematics Subject Classifications (2010)}:  60J80; 60F10.

\bigskip

\noindent\textit{Key words and phrases}: branching, immigration, critical, life period, limit theorems.

\section{Introduction}
 Let $\mathbf{Z}=\left\{
Z_{n},\ n\geq 0\right\} $ be a Galton-Watson branching process with immigration defined by
\begin{align}\label{Def Zn}
Z_{n}=\sum_{i=1}^{Z_{n-1}}\xi_{ni}+Y_n,\quad n\geq 1,\quad Z_0=Y_0,
\end{align}
where $Y_{0}$ is  a non-negative integer random variable with generating function  $G_{0}(s)$, $\{\xi_{ni},n,i\geq 1\}$ is a sequence of independent and identically distributed (i.i.d.) random variables with generating function $F(s)=\sum_{i=0}^\infty p_is^i$; \{$Y_n,n\geq 1$\} is another sequence of i.i.d. random variables with  generating function $B(s)=\sum_{i=0}^\infty b_is^i$, independent of $\{\xi_{ni},~n,i\geq1\}$. Denote $m:=\mathbb{E}\xi_{11}$. $\mathbf{Z}=\{Z_n,n\geq 0$\} is called supercritical, critical or subcritical, if $m>1$, $m=1$ or $m<1$. In this paper, we focus on the critical case.

From \eqref{Def Zn}, the generating function of $Z_n$ is of the form
\begin{align}\label{DefHn}
H_{n}(x)=G_{0}(F_{n}(x))\prod_{j=0}^{n-1}B[F_{j}(x)],\quad n\ge 1,
\end{align}
where $F_{j}(x)$ denotes the $j$-th iteration of the function $F(x)$ and $F_{0}(x)=x$.

Define
\begin{align}\label{Dlife}
\zeta:=\min\{n\geq 0:Z_{n}=0\},
\end{align}
then $\zeta$ is called the life period of the process $\mathbf{Z}$.

If we assume that $Y_{n}$ immigrants join the population if and only if  the individuals at generation $n-1$ have at least one offspring, then we can consider a modified version $\mathbf{W}=\left\{
W_{n},\ n\geq 0\right\} $ of the process $\mathbf{Z}$ specified
as follows. Without loss of generality, assume that $Z_{0}>0.$ Let
$W_{0}=Z_{0}$ and for $n\geq 1$,
\begin{align}\label{DefWn}
W_{n}:=
\begin{cases}
0, & \mbox{if}~\Lambda_{n}:=\xi
_{n1}+\ldots +\xi _{nZ_{n-1}}=0, \\
\Lambda_{n}+Y_{n}, & \mbox{if}~\Lambda_{n}>0.
\end{cases}
\end{align}
We call $\mathbf{W}$ as a branching process with immigration stopped at zero (BPIZ).  Recalling \eqref{Dlife}, we can see that
\begin{align*}
\zeta=\min\{n\geq 0:W_{n}=0\}.
\end{align*}

The distribution of the so-called life periods is important and interesting from not only theoretical but also practical points of view, see \cite{BM83}, \cite{DL20}, \cite{LVZ20}, \cite{Z72}. Information on the length of such periods may be used in epidemiology, ecology and seismology, see \cite{HJV}, \cite{Kag2010}, \cite{KA2002}. Meanwhile,
limit theorems for the critical branching processes with immigration are extensively studied, see for example, \cite{I85}, \cite{P72,P75a,P75b,P75}, \cite{V77}. By supposing $\mathbb{E}\xi_{11}^2<\infty$, $\mathbb{E}Y_1<\infty$ and $G_0(s) \equiv B(s)$, Zubkov~\cite{Z72} studied the process $\mathbf{W}$ defined by \eqref{DefWn}, and   studied the tail probability of $\zeta$ with the help of Tauberian theorem; Under the same assumption, Vatutin~\cite{V77} established  some scaling limit theorem for $\mathbf{W}$ conditioning on $W_n>0$. By assuming
\begin{align}\label{slack}
F(s)=s+(1-s)^{1+\nu}\mathcal{L}\left(\frac{1}{1-s}\right),\quad \nu\in(0,1],\end{align}
with some slowly varying function $\mathcal{L}$  at $\infty$, Pakes~\cite{P75b} proved some limit theorems for the population size of $\mathbf{Z}$. By assuming
$\mathbb{E}\xi_{11}^2<\infty$, $\mathbb{E}Y_1<\infty$ and $$
G_0(s)=1-(1-s)^\delta \mathcal{M}\left(\frac{1}{1-s}\right),\quad \delta\in(0,1], $$
with some slowly varying function $\mathcal{M}$ at $\infty$, Ivanoff and Seneta~\cite{I85} obtained  a conditional limit theorem for $\mathbf{W}$, improving the result in \cite{V77}.

In the present  paper, we are interested in the scaling limits of $\mathbf{W}$ conditioned on non-extinction, with the following assumptions on the generating functions of offspring and immigration:\\

\noindent\textbf{Basic Assumptions}
\begin{enumerate}
\item[(A1)] \quad $F(s)=s+\kappa_{1}(1-s)^{1+\nu},\quad \nu\in(0,1], $
\item[(A2)] \quad $B(s)=\mathrm{e}^{-\kappa_{2}(1-s)^{\theta}},\qquad\quad\quad \theta\in(0,1],$
\item[(A3)]\quad $G_{0}(s)=1-\kappa_{0}(1-s)^{\delta},\quad\; \delta\in(0,1], $
\end{enumerate}
where $\kappa_{i}(i=0,1,2)$ are positive constants.

First, we shall concentrate on $F_n(t)$,  the generating function of $Z_n$ without immigration, i.e. $Y_{n}\equiv0$. As we know, under the condition \eqref{slack} with $\nu=1$ (which means $\mathbb{E}\xi_{11}^2<\infty$), Kesten, Ney and Spitzer~\cite[Theorem 1]{KNS66} stated that the converging speed of $1/(1-F_n(t))$ to $1/(1-t)$ is $F''(1)n/2$. For $\nu\in (0,1)$, it is known from \cite{S68} that under \eqref{slack}, $(1-F_{n}(0))^{\nu}$ is of order $1/n$.
Later, Pakes~\cite[Lemma 1]{P75b} discussed $1-F_n(t)$, but did not give explicit estimation for $1-F_n(t)$  as $n\to \infty$.

In the present paper  (see Theorem~\ref{LFuniform}), we  shall prove that under Assumption (A1), $1/(1-F_n(t))^\nu$  converges to $1/(1-t)^\nu$ with speed $\kappa_1\nu n$,  generalizing \cite[Theorem 1]{KNS66}. Basing on it,  we then  study the asymptotic behavior of $\mathbb{P}(\zeta>n)$.  After that, some scaling limit theorems of $\mathbf{Z}$ are obtained,  generalizing  \cite[Lemmas 1-3]{V77} and  can be compared to   \cite[Theorems 3 and 5]{P75b}. With the help of above results, we finally get two conditional limit theorems of $\mathbf{W}$, which can be seen as partially improvement of Ivanoff and Seneta~\cite{I85} by  extending the assumptions $\mathbb{E}\xi_{11}^2<\infty$ and $\mathbb{E}Y_1<\infty$ to a more general case (see Assumptions (A1)--(A2)).

  We should mention that our results may hold under more general conditions \eqref{slack} and $B(s)= 1-\kappa_{2}(1-s)^{\theta}$, $\theta\in(0,1]$.  However, due to technical restriction,  here we only obtain  proofs of the main results under Assumptions (A1) and (A2).


In the following context, $C$ denotes a positive constant whose value may change from place to place.
With $f(n)=o(g(n))$ as $n\rightarrow \infty$, we refer that $\lim_{n\to \infty} f(n)/g(n)=0$; With $f(n)\asymp g(n)$ as $n\rightarrow \infty$, we refer that there exist  $M_1,M_2>0$ such that $M_1\le\liminf_{n\rightarrow \infty}f(n)/g(n)\le \limsup_{n\rightarrow \infty}f(n)/g(n)\leq M_2$.

The rest of this paper is organized as follows. In Section 2  we give a  precise estimation of $F_{n}(t)$. Life period of $\mathbf{W}$ is studied in Section 3.  In Section 4 the scaling limit theorems of $\mathbf{Z}$ are proved. Section 5 is devoted to the conditional limit theorems of $\mathbf{W}$.


\section{A precise estimation of $F_{n}(t)$}

According to \cite[Lemma 2]{S68} and Assumption (A1), we have
\begin{align*}
(1-F_{n}(0))^\nu \sim  \frac{1}{\kappa_{1}\nu n},\quad n\rightarrow\infty.
\end{align*}
In this section, we shall give some estimations on $1-F_{n}(t)$, see Theorem~\ref{LFuniform} and Proposition~\ref{l2.2}. Theorem~\ref{LFuniform} can be seen as a generalization of \cite[Theorem 1]{KNS66}. Both are important tools in the proofs of the scaling limit of $\mathbf{Z}$ in Section 4, and conditional limit theorems of $\mathbf{W}$ in Section 5.
\begin{theorem}\label{LFuniform}
Suppose  (A1) holds. Then
\begin{align*}
\lim\limits_{n\rightarrow\infty}\frac{1}{n}\left[\frac{1}{(1-F_{n}(t))^{\nu}}-\frac{1}{(1-t)^{\nu}}\right]
=\kappa_{1}\nu
\end{align*}
holds uniformly for $t\in[0,1)$.
\end{theorem}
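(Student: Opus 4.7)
The plan is to analyze the recursion for $u_n(t) := 1 - F_n(t)$ induced by Assumption (A1). Since $F(s) = s + \kappa_1(1-s)^{1+\nu}$, applying this to $F_{n-1}(t)$ gives
\[
u_n = u_{n-1}\bigl(1 - \kappa_1 u_{n-1}^\nu\bigr),
\]
and the natural quantity to track is $v_n(t) := (1-F_n(t))^{-\nu} = u_n(t)^{-\nu}$, which satisfies
\[
v_n = v_{n-1}\bigl(1 - \kappa_1 u_{n-1}^\nu\bigr)^{-\nu}.
\]
The goal reduces to showing $(v_n(t) - v_0(t))/n \to \nu\kappa_1$ uniformly for $t\in[0,1)$.

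First I would use the binomial series $(1-x)^{-\nu} = 1 + \nu x + \sum_{k\geq 2}a_k(\nu)x^k$, whose coefficients $a_k(\nu)$ are all nonnegative for $\nu>0$. Applied with $x = \kappa_1 u_{n-1}^\nu$ this yields the one-sided estimate
\[
v_n \;\geq\; v_{n-1}\bigl(1 + \nu\kappa_1 u_{n-1}^\nu\bigr) \;=\; v_{n-1} + \nu\kappa_1,
\]
using the identity $v_{n-1}u_{n-1}^\nu = 1$. Iterating gives $v_n \geq v_0 + n\nu\kappa_1$ for every $n\geq 0$ and every $t\in[0,1)$. This already supplies the lower half of the claim and, more importantly, provides an a priori growth rate for $v_k$ that will be crucial later.

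Next, to control the second-order correction, I would use the elementary estimate $(1-x)^{-\nu}\leq 1+\nu x + C_\nu x^2$ for $x\in[0,1/2]$, with $C_\nu$ depending only on $\nu$. By Step~1, $v_{k}\geq k\nu\kappa_1$, so $\kappa_1 u_k^\nu = \kappa_1/v_k \leq 1/2$ for all $k\geq N$, where $N$ is an integer independent of $t$. For such $k$, the identity $v_{k-1}u_{k-1}^{2\nu} = 1/v_{k-1}$ gives
\[
v_k - v_{k-1} \;\leq\; \nu\kappa_1 + \frac{C_\nu\kappa_1^2}{v_{k-1}}.
\]
Summing and absorbing the contribution of the first $N$ indices into a constant,
\[
0 \;\leq\; v_n - v_0 - n\nu\kappa_1 \;\leq\; C + C_\nu\kappa_1^2\sum_{k=N}^{n-1}\frac{1}{v_k}.
\]

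The main obstacle is making this uniform as $t\to 1^-$, since there $v_0(t) = (1-t)^{-\nu}\to\infty$ and the lower bound $v_k\geq k\nu\kappa_1$ alone is not sharp enough. This is precisely why I need the stronger bound $v_k \geq v_0 + k\nu\kappa_1$ from Step~1. Splitting the sum at the index $k^\star := \lceil v_0/(\nu\kappa_1)\rceil$, I would bound the early terms (where $v_k\geq v_0$) by $k^\star/v_0 = O(1)$ and the later ones by $\sum_{k>k^\star}1/(k\nu\kappa_1) = O(\log(n/k^\star))$. In either regime (whether $v_0\geq n$ or $v_0<n$) the total is $O(1+\log n)$, with constants independent of $t$. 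Dividing by $n$ then yields
\[
0 \;\leq\; \frac{v_n(t)-v_0(t)}{n} - \nu\kappa_1 \;=\; O\!\left(\frac{\log n}{n}\right)
\]
uniformly in $t\in[0,1)$, which is the desired conclusion.
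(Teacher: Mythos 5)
Your proof is correct, and it takes a genuinely different route from the paper's. Both arguments ultimately telescope the quantity $v_n(t)-v_0(t)-n\nu\kappa_1$ where $v_n=(1-F_n(t))^{-\nu}$, but the paper introduces two auxiliary functions
$\Theta(t)=\kappa_1\nu-\bigl[(1-t)^\nu-(1-F(t))^\nu\bigr]/(1-F(t))^{2\nu}$ and $\Xi(t)=\kappa_1\nu-\bigl[(1-F(t))^{-\nu}-(1-t)^{-\nu}\bigr]$, proves the pointwise inequality $\Theta\le\Xi$, and then exploits the monotonicity of $\Theta$ together with $\Theta(F_i(0))\to 0$ (a Ces\`aro argument) for the lower bound, and the inequality $\Xi(t)\le C(1-F(t))^\nu$ combined with the Slack asymptotics $(1-F_i(0))^\nu\sim 1/(\kappa_1\nu i)$ for the upper bound. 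You instead work directly with the one-step recursion $v_n=v_{n-1}(1-\kappa_1 u_{n-1}^\nu)^{-\nu}$ and bound $(1-x)^{-\nu}$ below by $1+\nu x$ (nonnegativity of the binomial coefficients) and above by $1+\nu x+C_\nu x^2$. Your route is more elementary and self-contained, yields the sharp one-sided inequality $v_n\ge v_0+n\nu\kappa_1$ for free, and delivers the $O((\log n)/n)$ rate in one pass --- the paper only gets $o(n)$ in Theorem 2.1 and extracts the $\log n/n$ rate separately in Proposition 2.2. One small remark on your final step: the splitting of $\sum 1/v_k$ at $k^\star=\lceil v_0/(\nu\kappa_1)\rceil$ is more than is needed, since $v_0=(1-t)^{-\nu}\ge 1$ already gives $v_k\ge 1+k\nu\kappa_1$, hence $\sum_{k<n}1/v_k=O(\log n)$ uniformly in $t$ without the case distinction; but the argument as written is also fine.
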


\begin{proof}
For $t\in[0,1)$, define
\begin{align}\label{Theta}
\Theta(t):=\kappa_{1}\nu-\frac{(1-t)^{\nu}-(1-F(t))^{\nu}}{(1-F(t))^{2\nu}},
\end{align}
and
\begin{align*}
\Xi(t):=\kappa_{1}\nu-\left[\frac{1}{(1-F(t))^{\nu}}-\frac{1}{(1-t)^{\nu}}\right].
\end{align*}
Then
\begin{align}\label{Defups}
\Upsilon_{n}&:=\kappa_{1}\nu n-\left[\frac{1}{(1-F_{n}(t))^{\nu}}-\frac{1}{(1-t)^{\nu}}\right]\nonumber\\
&=\kappa_{1}\nu n-\sum\limits_{i=0}^{n-1}\left[\frac{1}{(1-F_{i+1}(t))^{\nu}}-\frac{1}{(1-F_{i}(t))^{\nu}}\right]\nonumber\\
&=\sum\limits_{i=0}^{n-1}\Xi(F_{i}(t)).
\end{align}

On the one hand, one can verify that $\Theta(t)$ is increasing, and $\Theta(t)\uparrow 0$ as $t\uparrow1$. Observing that  $\Theta(t)\leq \Xi(t)$ for $t\in[0,1)$, then
\begin{align}\label{lo}
\Upsilon_{n}\geq \sum\limits_{i=0}^{n-1}\Theta(F_{i}(t))
\geq \sum\limits_{i=0}^{n-1}\Theta(F_{i}(0))=o(n),
\end{align}
the last equality holds by the fact that $F_{n}(0)\rightarrow1$  as $n\rightarrow\infty$.

On the other hand, by the definition of $\Xi(t)$ and the  monotonicity of $\Theta(t)$, we have
\begin{align*}
\Xi(t)&=\left[\kappa_{1}\nu\frac{(1-t)^{\nu}}{(1-F(t))^{\nu}}
-\frac{(1-t)^{\nu}-(1-F(t))^{\nu}}{(1-F(t))^{2\nu}}\right]\frac{(1-F(t))^{\nu}}{(1-t)^{\nu}}\\
&\leq  \left[\kappa_{1}\nu\frac{(1-t)^{\nu}}{(1-F(t))^{\nu}}
-\kappa_{1}\nu\right]\frac{(1-F(t))^{\nu}}{(1-t)^{\nu}}\\
&=\kappa_{1}\nu\left(\kappa_{1}\nu-\Theta(t)\right)(1-F(t))^{\nu}\frac{(1-F(t))^{\nu}}{(1-t)^{\nu}}\\
&\leq \kappa_{1}\nu\left(\kappa_{1}\nu-\Theta(0)\right)(1-F(t))^{\nu}\frac{(1-F(t))^{\nu}}{(1-t)^{\nu}}\\
&\leq \kappa_{1}\nu \frac{1-(1-\kappa_{1})^{\nu}}{(1-\kappa_{1})^{2\nu}}(1-F(t))^{\nu},
\end{align*}
where the last inequality holds by the fact that $$1-F(t)\le F'(1)(1-t), \quad t\in [0,1).$$ Then, we obtain
\begin{align}\label{up}
\Upsilon_{n}&\leq \kappa_{1}\nu \frac{1-(1-\kappa_{1})^{\nu}}{(1-\kappa_{1})^{2\nu}}\sum\limits_{i=1}^{n}(1-F_{i}(t))^{\nu}\nonumber\\
&\leq \kappa_{1}\nu \frac{1-(1-\kappa_{1})^{\nu}}{(1-\kappa_{1})^{2\nu}}\sum\limits_{i=1}^{n}(1-F_{i}(0))^{\nu}\nonumber\\
&=o(n),\quad n\to \infty.
\end{align}
\eqref{lo} and \eqref{up} yield the desired result.
\end{proof}

Theorem \ref{LFuniform} leads to

\begin{proposition}\label{l2.2} Suppose  (A1) holds. Then
\begin{align}\label{Funi}
(1-F_{n}(t))^{\nu}
=\frac{1+\varepsilon(n,t)}{\kappa_{1}\nu n+(1-t)^{-\nu}},
\end{align}
where $\lim\limits_{n\rightarrow\infty}\varepsilon(n,t)=0$  uniformly for $t\in[0,1)$,  and
\begin{align}\label{varebound}
\varepsilon(n,t)\asymp \frac{\log n}{n},\quad n\rightarrow\infty.
\end{align}
\end{proposition}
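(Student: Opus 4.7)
The plan is to obtain \eqref{Funi} as a direct algebraic reformulation of Theorem~\ref{LFuniform}, and then to sharpen the remainder bound $\Upsilon_n = o(n)$ from that theorem into the two-sided estimate $|\Upsilon_n|\asymp\log n$.

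First I would unfold the definition \eqref{Defups}: the identity
\[
\frac{1}{(1-F_n(t))^\nu} = \kappa_1\nu n + (1-t)^{-\nu} - \Upsilon_n
\]
lets me set $A_n(t):=\kappa_1\nu n+(1-t)^{-\nu}$ and, by matching with \eqref{Funi}, read off
\[
\varepsilon(n,t) = \frac{\Upsilon_n}{A_n(t)-\Upsilon_n}.
\]
Since $A_n(t)\geq\kappa_1\nu n$ and $\Upsilon_n=o(n)$ uniformly in $t$ by Theorem~\ref{LFuniform}, this immediately yields $\varepsilon(n,t)\to 0$ uniformly for $t\in[0,1)$.

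Second, for the sharper rate \eqref{varebound} I would revisit the estimates \eqref{lo} and \eqref{up} used in proving Theorem~\ref{LFuniform}, combined with the asymptotic $(1-F_i(0))^\nu\sim 1/(\kappa_1\nu i)$ recalled at the start of the section. The upper bound \eqref{up} already delivers
\[
|\Upsilon_n|\leq \kappa_1\nu\frac{1-(1-\kappa_1)^\nu}{(1-\kappa_1)^{2\nu}}\sum_{i=1}^n(1-F_i(0))^\nu \leq C\sum_{i=1}^n\frac{1}{i} = O(\log n),
\]
which, divided by $A_n(t)\geq\kappa_1\nu n$, produces the upper half of \eqref{varebound}. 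For the matching lower bound I would Taylor-expand $\Xi$ at $t=1$: writing $u=1-t$, using $1-F(t)=u(1-\kappa_1 u^\nu)$ and $(1-x)^{-\nu}-1=\nu x+\tfrac{\nu(\nu+1)}{2}x^2+O(x^3)$, a direct computation yields
\[
\Xi(t) = -\frac{\nu(\nu+1)}{2}\kappa_1^2(1-t)^\nu + O((1-t)^{2\nu}), \quad t\uparrow 1,
\]
so $|\Xi(F_i(0))|\asymp 1/i$ for large $i$. Summing $\Xi(F_i(t))$ over $i$ and bootstrapping via the already-proven upper half of \eqref{Funi} to obtain the two-sided comparison $(1-F_i(t))^\nu\asymp 1/i$ for large $i$, I would conclude $|\Upsilon_n|\asymp\log n$ and hence \eqref{varebound}.

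The main technical obstacle is this matching lower bound: uniform convergence $\varepsilon(n,t)\to 0$ is essentially formal from Theorem~\ref{LFuniform}, and the upper half of \eqref{varebound} is already implicit in \eqref{up}, but pinning down the precise logarithmic rate from below requires both the exact leading Taylor coefficient of $\Xi$ at $1$ and a genuine two-sided estimate on $(1-F_i(t))^\nu$, which must be extracted by bootstrapping from the upper-half of the conclusion itself.
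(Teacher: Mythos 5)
Your algebraic reformulation $\varepsilon(n,t)=\Upsilon_n/(A_n(t)-\Upsilon_n)$ is exactly the paper's identity $\varepsilon(n,t)=(1-F_n(t))^{\nu}\,\Upsilon_n$ in disguise, and the deduction of uniform convergence from Theorem~\ref{LFuniform} is precisely what the paper does. The overall strategy---reduce to bounding $\Upsilon_n=\sum_{i}\Xi(F_i(t))$ and obtain $|\Upsilon_n|\asymp\log n$ by estimating the summands---is also the paper's. The one genuine difference is the source of the sharp asymptotics for the summand: the paper computes the expansion of $\Theta$ at $t=1$ (its \eqref{Thetalim}, with constant $\tfrac12\kappa_1\nu(3\nu+1)$) and then feeds it through the sandwich $\Theta(F_i(0))\le\Theta(F_i(t))\le\Xi(F_i(t))$, whereas you expand $\Xi$ itself, finding $\Xi(t)=-\tfrac{\nu(\nu+1)}{2}\kappa_1^2(1-t)^\nu+O((1-t)^{2\nu})$. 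Both are correct, and yours is arguably cleaner since $\Xi$ is the quantity that actually appears in $\Upsilon_n$. Working with $\Theta(F_i(0))$ lets the paper sidestep the $t$-dependence inside the sum, while your route requires (as you note) the two-sided comparison $(1-F_i(t))^\nu\asymp 1/i$; for fixed $t$ this follows from Theorem~\ref{LFuniform} itself, so the bootstrapping is unproblematic.

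One imprecision worth flagging: your own Taylor expansion shows $\Xi(t)<0$ on $[0,1)$ (indeed $(1-y)^{-\nu}-1\ge\nu y$ gives $\Xi\le0$ globally), hence $\Upsilon_n\le0$ and $\varepsilon(n,t)\le0$. Consequently the inequality \eqref{up}, which bounds $\Upsilon_n$ \emph{from above} by a positive quantity, is vacuous and cannot deliver $|\Upsilon_n|=O(\log n)$ as you claim; that step would bound the wrong tail. Both halves of $|\Upsilon_n|\asymp\log n$ should instead be extracted from the expansion of $\Xi$ that you compute: $|\Xi(t)|\asymp(1-t)^\nu$ near $t=1$ gives $|\Xi(F_i(t))|\asymp 1/i$, hence $c\log n\le|\Upsilon_n|\le C\log n$, and dividing by $(1-F_n(t))^{-\nu}\asymp n$ yields \eqref{varebound} interpreted as $|\varepsilon(n,t)|\asymp(\log n)/n$. (The paper's written bounds suffer the same sign ambiguity; its $\asymp$ should likewise be read with absolute values.) With that fix, your argument is complete and is a mild variant, not a different route.
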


\begin{proof}
By  (A1) and \eqref{Theta},
\begin{align*}
\Theta(t)=\kappa_{1}\nu-\frac{1-[1-\kappa_{1}(1-t)^{\nu}]^{\nu}}{(1-t)^{\nu}[1-\kappa_{1}(1-t)^{\nu}]^{2\nu}},
\end{align*}
and one can verify that
\begin{align}\label{Thetalim}
\lim\limits_{t\uparrow1}\frac{1}{\log (1-\kappa_{1}(1-t)^{\nu})}\Theta(t)=\frac{1}{2}\kappa_{1}\nu (3\nu+1).
\end{align}
From \eqref{Defups} and \eqref{Funi},
\begin{align*}
\varepsilon(n,t)=(1-F_{n}(t))^{\nu}\sum\limits_{i=0}^{n-1}\Xi(F_{i}(t)).
\end{align*}
Combining with \eqref{up}, we obtain
\begin{align*}
\varepsilon(n,t)\leq C(1-F_{n}(0))^{\nu}\sum\limits_{i=1}^{n}(1-F_{i}(0))^{\nu}\leq C\frac{\log n}{n}.
\end{align*}
Notice that \eqref{lo} and \eqref{Thetalim},
\begin{align*}
\varepsilon(n,t)&\geq (1-F_{n}(t))^{\nu}\sum\limits_{i=0}^{n-1}\Theta(F_{i}(0))\\
&\geq C(1-F_{n}(t))^{\nu}\sum\limits_{i=1}^{n}\log (1-\kappa_{1}(1-F_{i}(0))^{\nu})\\
&\geq C\frac{\log n}{n}.
\end{align*}
The proof of \eqref{varebound} is completed.
\end{proof}


\section{Life period of $\mathbf{W}$}

In this section, we study the tail probability of life period of $\mathbf{W}$. Define


$$\sigma:=\frac{\kappa_{2}}{\kappa_{1}\nu}.$$

\begin{theorem}\label{thA2}
Suppose   (A1)--(A3) hold.  Then there exist  constants $K_0$--$K_6$ such that as $n\to\infty$,
\begin{align*}
\mathbb{P}(\zeta>n)\sim
\begin{cases}
K_0,& \mbox{if}~\theta<\nu,\\
K_1,& \mbox{if}~\theta=\nu, \sigma>1,\\
K_{2}\log^{-1}n, & \mbox{if}~\theta=\nu, \sigma=1,\\
K_{3}n^{\sigma-1}, & \mbox{if}~\theta=\nu, \sigma+\frac{\delta}{\nu}>1, \sigma<1,\\
K_{4}n^{\sigma-1}\log n, & \mbox{if}~\theta=\nu, \sigma+\frac{\delta}{\nu}=1,\\
K_{5}n^{-\frac{\delta}{\nu}}, & \mbox{if}~\theta=\nu, \sigma+\frac{\delta}{\nu}<1,\\
K_{6}n^{-\frac{\delta}{\nu}}, & \mbox{if}~\theta>\nu, \delta<\nu.
\end{cases}
\end{align*}

\end{theorem}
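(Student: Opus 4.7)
My plan is to reduce the problem to a linear renewal equation for $v_n := \mathbb{P}(\zeta > n)$ and then extract its asymptotics through classical renewal- and Karamata--Tauberian-theoretic analysis, using the sharp estimates from Theorem~\ref{LFuniform} and Proposition~\ref{l2.2}. Writing $h_n(s) := \mathbb{E}[s^{W_n}]$ and conditioning on $W_{n-1}$ in~\eqref{DefWn} gives the one-step identity
\begin{align*}
h_n(s) = (1-B(s))\, h_{n-1}(p_0) + B(s)\, h_{n-1}(F(s)),
\end{align*}
from which $h_n(0) = h_{n-1}(p_0) = \mathbb{P}(\zeta \le n)$ since $F(0) = p_0$. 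Iterating by successively substituting $s = F_j(0)$ at step $j$, and writing $\Pi_n := \prod_{j=1}^{n-1} B(F_j(0))$ (with $\Pi_1 = 1$), the identity telescopes to
\begin{align*}
v_n = \Pi_n \bigl[1 - G_0(F_n(0))\bigr] + \sum_{j=1}^{n-1}(\Pi_j - \Pi_{j+1})\, v_{n-j},
\end{align*}
i.e.\ a renewal equation $v = g + c \ast v$ with forcing $g_n := \Pi_n[1-G_0(F_n(0))]$ and non-negative kernel $c_j := \Pi_j - \Pi_{j+1}$ of total mass $1 - \lim_n \Pi_n$.

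Next I would feed in the sharp asymptotics. Proposition~\ref{l2.2} yields $1 - F_j(0) \sim (\kappa_1 \nu j)^{-1/\nu}$, so (A2)--(A3) give $1 - B(F_j(0)) \sim \kappa_2 (\kappa_1 \nu j)^{-\theta/\nu}$ and $1 - G_0(F_n(0)) \sim \kappa_0 (\kappa_1 \nu n)^{-\delta/\nu}$, while $\log \Pi_n \sim -\kappa_2 \sum_{j=1}^{n-1}(\kappa_1 \nu j)^{-\theta/\nu}$ splits into three regimes: $\Pi_n$ decays stretched-exponentially when $\theta<\nu$, $\Pi_n \sim C n^{-\sigma}$ when $\theta=\nu$, and $\Pi_n \to C_\infty > 0$ when $\theta>\nu$. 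Consequently $c_j \sim C j^{-(\sigma+1)}$ and $g_n \sim C n^{-(\sigma+\delta/\nu)}$ in the critical regime $\theta=\nu$, whereas in the defective regime $\theta>\nu$ the kernel satisfies $\sum c_j = 1-C_\infty < 1$ and $g_n \sim C' n^{-\delta/\nu}$.

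The tail of $\zeta$ then emerges from a case-split on the renewal equation. When $\theta<\nu$, or $\theta=\nu$ with $\sigma>1$, the kernel is proper with finite mean and $g$ is summable, so Blackwell's renewal theorem gives $v_n \to K_0$ (respectively $K_1$). For the remaining $\theta = \nu$ sub-cases the kernel has infinite mean; I would then pass to generating functions $V(z) = G(z)/(1-C(z))$ and invoke the Karamata Tauberian theorem. Karamata yields $1-C(z) \sim C'(1-z)^{\sigma}$ (with an extra factor $\log\tfrac{1}{1-z}$ at the boundary $\sigma = 1$), and the behaviour of $G(z)$ near $z=1$ is finite, logarithmic, or regularly varying according as $\sigma+\delta/\nu$ is $>$, $=$, or $<$ $1$; inversion produces exactly the four profiles $K_2/\log n$, $K_3 n^{\sigma-1}$, $K_4 n^{\sigma-1}\log n$ and $K_5 n^{-\delta/\nu}$. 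Finally, for $\theta>\nu$ with $\delta<\nu$, defectiveness gives $V(z) \sim G(z)/(1-\rho)$ and, because $c_j$ then decays faster than $g_n$, one obtains $v_n \sim g_n/(1-\rho) = K_6 n^{-\delta/\nu}$. The main technical obstacle is the book-keeping in the two boundary sub-cases ($\sigma=1$ and $\sigma+\delta/\nu = 1$), where both $1-C(z)$ and $G(z)$ acquire logarithmic corrections; here the uniform $\varepsilon(n,t) = O(\log n / n)$ error from Proposition~\ref{l2.2} is essential to guarantee that these corrections are not polluted by secondary error terms.
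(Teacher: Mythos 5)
Your proposal is correct and follows essentially the same route as the paper: the renewal identity $v_n = g_n + (c*v)_n$ you derive from the one-step recursion $h_n(s)=(1-B(s))h_{n-1}(p_0)+B(s)h_{n-1}(F(s))$ is precisely the coefficient form of Zubkov's generating-function identity $U(s)=D(s)/(1-A(s))$ (Lemma~\ref{LUA}), your $\Pi_n$-asymptotics are the content of Lemma~\ref{LB2gamma} (relying on the $O(\log n/n)$ error in Proposition~\ref{l2.2} exactly as you note), and the seven cases are classified by the behaviour of $G(z)$ and $1-C(z)$ near $z=1$ and inverted by Karamata's Tauberian theorem, just as in the paper. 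The only cosmetic differences are that you re-derive the renewal equation rather than citing Zubkov, and for the two convergent cases you appeal to the key renewal theorem where the paper again uses the Tauberian theorem to get $\sum_{k\le n}u_k\sim Kn$ and then $u_n\to K$; in either form you should make explicit (as the paper does) that the Tauberian inversion to pointwise asymptotics uses the monotonicity of $u_n=\mathbb{P}(\zeta>n)$.
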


\begin{remark}

\begin{enumerate}

\item[(1)] By the proof of Theorem~\ref{thA2}, we can see that $K_0$--$K_6$ are expressed  by $\theta$, $\nu$, $\delta$ and $ \kappa_i$, $i=0,1,2$.

\item[(2)] The theorem does not  include all classifications for $\theta$, $\nu$ and $\delta$. For example, we have not obtained the order of $\mathbb{P}(\zeta>n)$ when $\theta>\nu$ and  $\delta\geq \nu$.
    \end{enumerate}
\end{remark}

In the following, we define
\begin{align}\label{D1}
 \gamma_{n}^{(0)}(s):=\prod_{j=0}^{n-1}[B(F_{j}(s))], \quad \gamma_{n}(s):=G_{0}(F_{n}(s))\prod_{j=0}^{n-1}[B(F_{j}(s))],\quad s\in [0,1], \end{align}
and denote $\gamma_{n}^{(0)}(0)$ by $\gamma_{n}^{(0)}$,  $\gamma_{n}(0)$ by $\gamma_{n}$.

\begin{lemma}\label{LB2gamma}
Suppose (A1) and (A2) hold. Then as $n\rightarrow\infty$,
\begin{align*}
 \gamma_{n}^{(0)}\sim
\begin{cases}
c_{0}, & \mbox{if}~\theta>\nu, \\
c_1n^{-\sigma},& \mbox{if}~\theta=\nu,  \\
c_{3}\exp\left\{ -c_2n^{1-\frac{\theta}{\nu}}
 \right\},& \mbox{if}~\theta<\nu,
\end{cases}
\end{align*}
where  $c_{i}$, $i=0,...,3$  are positive constants depending on $\theta$, $\nu$ and $ \kappa_i$, $i=0,1,2$.

\end{lemma}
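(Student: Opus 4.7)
The plan is to reduce the product to a sum by taking logarithms. Using (A2),
\begin{equation*}
-\log \gamma_n^{(0)} \;=\; \kappa_2 \sum_{j=0}^{n-1} (1-F_j(0))^\theta.
\end{equation*}
By Proposition \ref{l2.2} applied at $t=0$, $(1-F_j(0))^\nu = (1+\varepsilon(j,0))/(\kappa_1\nu j + 1)$ with $\varepsilon(j,0)\asymp (\log j)/j$, whence
\begin{equation*}
(1-F_j(0))^\theta = (\kappa_1\nu j)^{-\theta/\nu}\bigl(1 + O((\log j)/j)\bigr), \qquad j\to\infty.
\end{equation*}
The three cases of the lemma then correspond exactly to the three regimes of the series $\sum_j j^{-\theta/\nu}$: convergence ($\theta>\nu$), borderline logarithmic divergence ($\theta=\nu$), and power-law divergence ($\theta<\nu$).

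If $\theta > \nu$, the exponent $\theta/\nu > 1$ and the error term above is summable, so $\sum_{j=0}^{\infty}(1-F_j(0))^\theta$ converges to a finite limit $S$, giving $\gamma_n^{(0)} \to c_0 := \exp(-\kappa_2 S) > 0$. If $\theta=\nu$, Proposition \ref{l2.2} yields $(1-F_j(0))^\nu = (\kappa_1\nu j)^{-1}(1+O((\log j)/j))$; since $j\mapsto (\log j)/j^2$ is summable and $\sum_{j=1}^{n-1} 1/j = \log n + c_H + o(1)$ for the Euler--Mascheroni constant $c_H$, I expect
\begin{equation*}
\sum_{j=0}^{n-1}(1-F_j(0))^\nu = \frac{\log n}{\kappa_1\nu} + C + o(1)
\end{equation*}
for some constant $C$, hence $\gamma_n^{(0)} \sim c_1 n^{-\sigma}$ with $c_1 = e^{-\kappa_2 C}$.

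The delicate case is $\theta < \nu$. Since $\theta/\nu + 1 > 1$, the correction $(1-F_j(0))^\theta - (\kappa_1\nu j)^{-\theta/\nu} = O(j^{-\theta/\nu-1}\log j)$ is still summable, so the problem reduces to the Euler--Maclaurin expansion
\begin{equation*}
\sum_{j=1}^{n-1} j^{-\theta/\nu} = \frac{n^{1-\theta/\nu}}{1-\theta/\nu} + \zeta(\theta/\nu) + o(1), \qquad 0<\theta/\nu<1,
\end{equation*}
where $\zeta$ is the analytic continuation of the Riemann zeta function. Combining these,
\begin{equation*}
\sum_{j=0}^{n-1}(1-F_j(0))^\theta = \frac{(\kappa_1\nu)^{-\theta/\nu}}{1-\theta/\nu}\, n^{1-\theta/\nu} + D + o(1)
\end{equation*}
for some constant $D$; taking $c_2 := \kappa_2(\kappa_1\nu)^{-\theta/\nu}/(1-\theta/\nu)$ and $c_3 := e^{-\kappa_2 D}$ then yields the claimed $\gamma_n^{(0)} \sim c_3 \exp(-c_2 n^{1-\theta/\nu})$.

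The main obstacle will be pinning down the multiplicative constant $c_3$ in the third regime. Obtaining equivalence up to a multiplicative constant (rather than merely the exponential rate) requires knowing $-\log\gamma_n^{(0)} = c_2 n^{1-\theta/\nu} + \text{const} + o(1)$ with a truly convergent correction, which forces us to use the sharp $O((\log j)/j)$ error from Proposition \ref{l2.2} together with the Euler--Maclaurin correction $\zeta(\theta/\nu)$ in the Riemann sum; neither a pure integral comparison nor the weaker bound $\varepsilon(n,0)=o(1)$ would suffice. Once these two sum-asymptotics are in hand, the other two cases reduce to a single summation argument.
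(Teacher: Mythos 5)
Your proof is correct and follows essentially the same route as the paper: take logarithms to reduce the product to $-\kappa_2\sum_{j<n}(1-F_j(0))^\theta$, then feed in the sharp asymptotics of Proposition~\ref{l2.2} (including the $\varepsilon(j,0)\asymp(\log j)/j$ rate) to isolate a leading term plus a convergent remainder. The paper keeps the leading term as $\sum_j(1+\kappa_1\nu j)^{-\theta/\nu}$ rather than $\sum_j(\kappa_1\nu j)^{-\theta/\nu}+\zeta(\theta/\nu)$, but the decomposition and the reason the remainder converges are identical; your identification of $c_2=\kappa_2(\kappa_1\nu)^{-\theta/\nu}/(1-\theta/\nu)$ also agrees with the paper's $\kappa_1^{-\theta/\nu}\kappa_2\nu^{1-\theta/\nu}/(\nu-\theta)$.
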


\begin{proof}
 By \eqref{D1} and Assumption (A2),
\begin{align}\label{1}
 \gamma_{n}^{(0)}
= \exp\left\{- \kappa_{2}\sum_{j=0}^{n-1}(1-F_{j}(0))^{\theta} \right\}.
\end{align}

(i) For $\theta>\nu$,  by \eqref{Funi}, we have
$$\sum_{j=0}^{\infty}(1-F_{j}(0))^{\theta}<\infty,$$
and then
$$ \lim_{n\to \infty} \gamma_{n}^{(0)}=\exp\left\{- \kappa_{2}\sum_{j=0}^{\infty}(1-F_{j}(0))^{\theta} \right\}:=c_{0}.$$

(ii) For $\theta= \nu$, by Proposition~\ref{l2.2},
 \begin{align}\label{es2}
 -\kappa_{2}\sum_{j=0}^{n-1}(1-F_{j}(0))^{\theta}& =- \kappa_{2} \sum_{j=0}^{n-1} \frac{1+\varepsilon(j,0)}{1+\kappa_{1}\nu j}\nonumber\\
 &=-\kappa_{2} \sum_{j=0}^{n-1} \frac{1}{1+\kappa_{1}\nu j}-\kappa_{2}\sum_{j=0}^{n-1} \left(\frac{1+\varepsilon(j,0)}{1+\kappa_{1}\nu j}-\frac{1}{1+\kappa_{1}\nu j}\right)\nonumber\\
 &:=M_1(n)+M_2(n),
\end{align}
where \begin{align*} M_1(n)\sim-\sigma  \log n, \quad n\rightarrow\infty,\end{align*}
and applying \eqref{varebound},
\begin{align*}
\sum_{j=0}^{n-1} \left|\frac{1+\varepsilon(j,0)}{1+\kappa_{1}\nu j}-\frac{1}{1+\kappa_{1}\nu j}\right|\le\sum_{j=0}^{n-1} \frac{|\varepsilon(j,0)|}{1+\kappa_{1}\nu j}\le C\sum_{j=1}^{\infty}\frac{\log j }{j^{2}}<\infty,
\end{align*}
which implies that
$$
 \lim_{n\to \infty} M_2(n)=M_2<\infty.
 $$
By combining above discussions we obtain
$$
\gamma_{n}^{(0)}\sim c_1n^{-\sigma}, \quad n\to \infty,
$$
with $c_1=e^{M_2}$. From the proof of Proposition~\ref{l2.2}, we can see that $c_1$ depends only on $\nu$, $\kappa_1$ and $\kappa_2$.

(iii) For $\theta< \nu$, similarly to  \eqref{es2},
\begin{align*}
-\kappa_{2}\sum_{j=0}^{n-1}(1-F_{j}(0))^{\theta}= -\kappa_{2}\sum_{j=0}^{n-1}\left(\frac{1}{1+\kappa_{1}\nu j}\right)^{\frac{\theta}{\nu}}
+M_{3}(n),
\end{align*}
where
$$M_{3}(n):=-\kappa_{2}\sum_{j=0}^{n-1} \left[\left(\frac{1+\varepsilon(j,0)}{1+\kappa_{1}\nu j}\right)^{\frac{\theta}{\nu}}-\left(\frac{1}{1+\kappa_{1}\nu j}\right)^{\frac{\theta}{\nu}}\right],$$
and
$$\lim\limits_{n\rightarrow\infty}M_{3}(n)=M_{3}<\infty.$$
Then
$$\gamma_{n}^{(0)}\sim c_{3}\exp\left\{ -c_2n^{1-\frac{\theta}{\nu}}
 \right\},\quad n\rightarrow\infty,$$
where $c_2=\kappa_{1}^{-\frac{\theta}{\nu}}\kappa_{2}\nu^{1-\frac{\theta}{\nu}}
 /(\nu-\theta)$, and $c_{3}=e^{M_{3}}$.

We complete the proof.
\end{proof}

The following lemma is from \cite{Z72}.
\begin{lemma}\label{LUA}\cite[Lemma 1]{Z72}  Define
\begin{align*}
u_{k}:=\mathbb{P}(W_{k}>0),\quad U(s):=\sum_{k=0}^{\infty}u_{k}s^{k}.
\end{align*}
Then
\begin{align*}
U(s)=\frac{D(s)}{1-A(s)},
\end{align*}
where
\begin{align*}
D(s):=\frac{1}{\kappa_{0}}\sum_{k=0}^{\infty}d_{k}s^{k}:=\frac{1}{\kappa_{0}}\sum_{k=0}^{\infty}\left(\gamma_{k}^{(0)}- \gamma_{k}\right)s^{k},
\end{align*}
\begin{align*}
A(s):=\sum_{k=0}^{\infty}a_{k}s^{k+1}:=\sum_{k=0}^{\infty}\left(\gamma_{k}^{(0)}-\gamma_{k+1}^{(0)}\right)s^{k+1}.
\end{align*}
\end{lemma}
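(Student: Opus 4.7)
The plan is to derive the stated identity by first obtaining a renewal-type convolution recursion for the extinction probabilities $q_n := \mathbb{P}(W_n = 0) = 1 - u_n$, and then translating it into the generating-function identity. The key structural observation is that the telescoping $a_k = \gamma_k^{(0)} - \gamma_{k+1}^{(0)} = (1 - B(F_k(0)))\gamma_k^{(0)}$ built into the definition of $A(s)$ arises naturally when one iterates the one-step transition rule of $\mathbf{W}$.

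First I would derive a one-step recursion for $V_n(s) := \mathbb{E}[s^{W_n}]$. Conditioning on $W_{n-1}$ in \eqref{DefWn}, the event $\{\Lambda_n = 0\}$ has conditional probability $F(0)^{W_{n-1}}$ and absorbs the process at $0$, while on $\{\Lambda_n > 0\}$ the value $W_n = \Lambda_n + Y_n$ contributes $B(s)(F(s)^{W_{n-1}} - F(0)^{W_{n-1}})$ to the generating function. Taking expectation over $W_{n-1}$ gives
\[
V_n(s) = V_{n-1}(F(0))(1 - B(s)) + B(s)\, V_{n-1}(F(s)),\qquad V_0(s) = G_0(s).
\]
Iterating this identity $n$ times, by substituting the same formula into $V_{n-1-k}(F_k(s))$ at each stage, and collecting the telescoping factor $(1 - B(F_k(s)))\prod_{j=0}^{k-1} B(F_j(s)) = \gamma_k^{(0)}(s) - \gamma_{k+1}^{(0)}(s)$, yields
\[
V_n(s) = \gamma_n(s) + \sum_{k=0}^{n-1} V_{n-1-k}(F(0))\bigl[\gamma_k^{(0)}(s) - \gamma_{k+1}^{(0)}(s)\bigr].
\]
Setting $s = 0$ and using the immediate identity $V_m(F(0)) = q_{m+1}$ (which follows from the one-step recursion at $s=0$, since $(1 - B(0)) + B(0) = 1$) produces
\[
q_n = \gamma_n + \sum_{k=0}^{n-1} a_k\, q_{n-k},\qquad n \ge 1.
\]

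Next I would convert to $u_n = 1 - q_n$. Because $\sum_{k=0}^{n-1} a_k = 1 - \gamma_n^{(0)}$ telescopes, subtracting the $q_n$ recursion from $1$ gives
\[
u_n = d_n + \sum_{k=0}^{n-1} a_k\, u_{n-k}, \qquad d_n := \gamma_n^{(0)} - \gamma_n.
\]
Multiplying by $s^n$, summing over $n \ge 0$, and reassembling the right-hand convolution into the generating functions $A$ and $D$ (accounting for the shift $A(s) = \sum a_k s^{k+1}$ and the $1/\kappa_0$ normalization in $D$), the resulting functional equation solves to $U(s) = D(s)/(1 - A(s))$.

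The main obstacle I anticipate is the bookkeeping in the iteration: one must keep straight the interplay between $F_j(s)$ (appearing inside the surviving $B(\cdot)$ product) and $F(0)$ (appearing inside each $V_{n-1-k}(F(0))$ factor), and at the end reconcile the $s^{k+1}$-versus-$s^k$ shift in the definitions of $A$ and $D$. In particular, the self-referential term $a_0 u_n$ arising from the $k=0$ summand makes the recursion implicit in $u_n$; its generating-function contribution is exactly what builds the $1 - A(s)$ factor in the denominator of the final identity.
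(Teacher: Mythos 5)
The paper gives no argument for this lemma---it is quoted from Zubkov~\cite{Z72}---so there is no internal proof to compare against; I am assessing your derivation on its own.

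Your one-step recursion $V_n(s)=V_{n-1}(F(0))(1-B(s))+B(s)V_{n-1}(F(s))$, the iterated identity $V_n(s)=\gamma_n(s)+\sum_{k=0}^{n-1}V_{n-1-k}(F(0))\bigl(\gamma_k^{(0)}(s)-\gamma_{k+1}^{(0)}(s)\bigr)$, the relation $V_{n-1-k}(F(0))=q_{n-k}$, and the resulting recursion $u_n=d_n+\sum_{k=0}^{n-1}a_ku_{n-k}$ are all correctly derived from \eqref{DefWn}. The gap is in the last step, and it is not mere bookkeeping: that recursion does not produce $U(s)=D(s)/(1-A(s))$. Reindexing $\sum_{n\ge1}s^n\sum_{k=0}^{n-1}a_ku_{n-k}$ with $j=n-k\ge1$ gives $\bigl(\sum_{k\ge0}a_ks^k\bigr)\bigl(\sum_{j\ge1}u_js^j\bigr)=\tfrac{A(s)}{s}\bigl(U(s)-u_0\bigr)$, \emph{not} $A(s)U(s)$; the extra power of $s$ in $A(s)=\sum_k a_ks^{k+1}$ requires the strictly lower-triangular convolution $\sum_k a_ku_{n-1-k}$, which your recursion does not have because of the diagonal term $a_0u_n$. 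Solving what you actually derived gives $U(s)\,(s-A(s))=\kappa_0\bigl(sD(s)-A(s)\bigr)$, a different identity. One can also see the mismatch at the level of coefficients: with $\kappa_0=1$ the claimed formula forces $u_1=d_1+a_0d_0=1-B(0)G_0(F(0))$, whereas your recursion (and a direct computation) gives $u_1=1-G_0(F(0))$; these agree only if $B(0)=1$. The asserted identity corresponds to the recursion $q_n=\gamma_n+\sum_{k=0}^{n-1}q_{n-1-k}a_k$ (this is exactly the $G_n$ decomposition the paper writes down in Section~5.2, which shifts the index of $q$ by one relative to yours), and that in turn describes a slightly different renewal step than the one encoded in \eqref{DefWn}. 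So the ingredients of your plan are right, but as written it proves a different generating-function identity than the one stated, and the claim in your last sentence---that the self-referential $a_0u_n$ term ``is exactly what builds the $1-A(s)$ factor''---is where the argument breaks down.
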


With above preparation, we now give the proof of  Theorem \ref{thA2}.

\textbf{Proof of Theorem \ref{thA2}.} 
First, we consider the asymptotic behavior of $1-A(s)$ as $s\to 1^{-}$. According to Assumption (A2),
\begin{align*}
a_{k}=\gamma_{k}^{(0)}[1-B(F_{k}(0))]
\sim \kappa_{2}\left(\frac{1}{\kappa_{1}\nu}\right)^{\theta/\nu}k^{-\frac{\theta}{\nu}}\gamma_{k}^{(0)},\quad k\rightarrow\infty.
\end{align*}
Together with Lemma \ref{LB2gamma}, we obtain
\begin{align}\label{ekak}
ka_{k}\sim
\begin{cases}
c_4k^{1-\frac{\theta}{\nu}}
\exp\left\{-c_2k^{1-\frac{\theta}{\nu}}
\right\}
,& \mbox{if}~\theta<\nu,  \\
c_5 k^{-\sigma},& \mbox{if}~\theta=\nu,  \\
c_6k^{1-\frac{\theta}{\nu}}, & \mbox{if}~\theta>\nu,
\end{cases}
\end{align}
where $c_i$, $i=4,5,6$ are positive constants depending on $\theta$, $\nu$ and $ \kappa_i$, $i=0,1,2$.

(i) If $\theta=\nu$ and $\sigma>1$ or $\theta<\nu$, by Lemma \ref{LB2gamma} and  \eqref{ekak},
\begin{align*}
A(1)=1-\lim\limits_{n\rightarrow\infty}\gamma_{n}^{(0)}=1,\quad A'(1)=\sum\limits_{k=0}^{\infty}(k+1)a_k<\infty,
\end{align*}
and therefore,
\begin{align}\label{eA}
1-A(s) \sim A'(1)(1-s),\quad s\to 1^{-}.
\end{align}

(ii) If $\theta=\nu$ and $\sigma=1$, by \eqref{ekak} we have
\begin{align*}
\sum\limits_{k=0}^{n}ka_{k}\sim  c_5\log n,\quad n\rightarrow\infty.
\end{align*}
It follows from Tauberian Theorem~\cite[Chapter XIII5, Theorem 5]{Feller2} that
\begin{align*}
A'(s)\sim  -c_5\log(1-s), \quad s\to 1^{-}.
\end{align*}
Then
\begin{align}\label{eAs}
1-A(s)=\int_{s}^{1}A'(t) \mathrm{d}t
\sim  -c_5(1-s)\log(1-s),\quad s\to 1^{-}.
\end{align}

(iii) If $\theta=\nu$ and $\sigma<1$, also by \cite[Chapter XIII5, Theorem 5]{Feller2}, we get
\begin{align}\label{eAs1}
1-A(s)
\sim  c_5\sigma^{-1}\Gamma(1-\sigma)(1-s)^{\sigma},\quad s\rightarrow1^{-}.
\end{align}

(iv) If $\theta>\nu$, then
\begin{align}\label{eAs2}
A(1)=1-\lim\limits_{n\rightarrow\infty}\gamma_{n}^{(0)}=1-c_{0}\in(0,1).
\end{align}
Collecting \eqref{eA}--\eqref{eAs2}, we conclude that as $s\to 1^{-}$,
\begin{align}\label{Asbehavior}
1-A(s) \sim
\begin{cases}
A'(1)(1-s), &\theta=\nu,\sigma>1,~\mbox{or}~\theta<\nu,\\
-c_5(1-s)\log (1-s), &\theta=\nu, \sigma=1,\\
c_5\sigma^{-1}\Gamma(1-\sigma)(1-s)^{\sigma}, &\theta=\nu, \sigma<1,\\
c_{0}, &\theta>\nu.
\end{cases}
\end{align}

Next, we consider the asymptotic behavior of $D(s)$ as $s\to 1^{-}$. According to Assumption (A3),
\begin{align*}
d_{k}=\gamma_{k}^{(0)}[1-G_{0}(F_{k}(0))]
\sim \kappa_{0}\left(\frac{1}{\kappa_{1}\nu}\right)^{\delta/\nu}k^{-\frac{\delta}{\nu}}\gamma_{k}^{(0)},\quad k\rightarrow\infty.
\end{align*}
As we do for $1-A(s)$, applying \cite[Chapter XIII5, Theorem 5]{Feller2}, we have as $s\to 1^{-}$,
\begin{align}\label{Dsbehavior}
D(s) \sim
\begin{cases}
-c_7\log(1-s), & \theta=\nu, \sigma+\frac{\delta}{\nu}=1,\\
c_8 (1-s)^{\sigma+\frac{\delta}{\nu}-1}, &\theta=\nu, \sigma+\frac{\delta}{\nu}<1,\\
-c_9\log(1-s), &\theta>\nu, \delta=\nu,\\
c_{10} (1-s)^{\frac{\delta}{\nu}-1}, &\theta>\nu, \delta<\nu,\\
c_{11}, &\mbox{otherwise},\\
\end{cases}
\end{align}
where $c_i$, $i=7,\cdots,11$ are positive constants depending on $\theta$, $\nu$, $\delta$ and $ \kappa_i$, $i=0,1,2$. Noticing  the sequence $\{u_{n},n\geq 0\}$ is decreasing in $n$, we end the proof by \eqref{Asbehavior}--\eqref{Dsbehavior}, and \cite[Chapter XIII5, Theorem 5]{Feller2}.\qed

\section{Scaling limit of $\mathbf{Z}$}

With the help of Theorem~\ref{LFuniform}, we obtain
\begin{theorem}\label{Lgamma1}
Suppose  (A1) and (A2) hold.  If  $\theta=\nu$, then for each $t>0$,
\begin{align}\label{gamma0}
\lim_{n\to \infty}\sup\limits_{k\leq n}\bigg|\left(1+\frac{k}{n}t^{\nu}\right)^\sigma\gamma_{k}^{(0)}\left(e^{-t(1-F_{n}(0))}\right)
-1\bigg|
=0.
\end{align}
\end{theorem}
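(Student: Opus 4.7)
Since $\theta=\nu$, Assumption (A2) gives
$$\log\gamma_k^{(0)}(s)=-\kappa_2\sum_{j=0}^{k-1}(1-F_j(s))^\nu,$$
so setting $s_n:=e^{-t(1-F_n(0))}$ the goal reduces to proving
$$\kappa_2\sum_{j=0}^{k-1}(1-F_j(s_n))^\nu=\sigma\log\!\Bigl(1+\frac{kt^\nu}{n}\Bigr)+o(1)$$
uniformly in $k\le n$. My strategy is to insert Proposition~\ref{l2.2} into the sum, identify the main term as a Riemann-type sum for a logarithm, and control the $\varepsilon$-error by a splitting argument.

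\textbf{Main computation.} From $1-e^{-x}=x(1+O(x))$ together with the Sevastyanov asymptotic $(1-F_n(0))^\nu\sim(\kappa_1\nu n)^{-1}$ stated at the start of Section~2, one has $(1-s_n)^{-\nu}=\kappa_1\nu n\,t^{-\nu}(1+o(1))$; writing $a_n:=(1-s_n)^{-\nu}/(\kappa_1\nu)$, Proposition~\ref{l2.2} gives
$$\sum_{j=0}^{k-1}(1-F_j(s_n))^\nu=\frac{1}{\kappa_1\nu}\sum_{j=0}^{k-1}\frac{1+\varepsilon(j,s_n)}{j+a_n}.$$
The unperturbed part $(\kappa_1\nu)^{-1}\sum_{j=0}^{k-1}(j+a_n)^{-1}$ equals $(\kappa_1\nu)^{-1}[\log((k+a_n)/a_n)+O(a_n^{-1})]$ by standard integral comparison, and since $k/a_n=(kt^\nu/n)(1+o(1))$ stays in a bounded interval for $k\le n$, this is $(\kappa_1\nu)^{-1}\log(1+kt^\nu/n)+o(1)$ uniformly. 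Multiplying by $\kappa_2$ produces the claimed main term with constant $\sigma=\kappa_2/(\kappa_1\nu)$.

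\textbf{Error control and main obstacle.} It remains to show $\Delta_n(k):=\sum_{j=0}^{k-1}\varepsilon(j,s_n)/(j+a_n)=o(1)$ uniformly in $k\le n$. This is the delicate step, because the bound $|\varepsilon(j,\cdot)|\le C\log j/j$ from \eqref{varebound} is useless at small $j$, while the naive estimate $|\varepsilon(j,s_n)|\le C$ is too crude at large $j$. I split at $j_n:=\lfloor\sqrt n\rfloor$: for $j<j_n$, the uniform boundedness of $\varepsilon$ together with $j+a_n\ge a_n$ gives a contribution of order $j_n/a_n=O(t^\nu/\sqrt n)=o(1)$; for $j_n\le j\le k-1$, the uniform bound $|\varepsilon(j,s_n)|\le C\log j/j$, which (as one reads off from \eqref{up} in the proof of Proposition~\ref{l2.2}) is uniform in the second argument, yields
$$\sum_{j=j_n}^{k-1}\frac{|\varepsilon(j,s_n)|}{j+a_n}\le C\sum_{j=j_n}^{n}\frac{\log j}{j(j+a_n)}=O\!\Bigl(\frac{(\log n)^2}{a_n}\Bigr)=o(1),$$
uniformly in $k$. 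Combining these pieces gives $\log\gamma_k^{(0)}(s_n)=-\sigma\log(1+kt^\nu/n)+o(1)$ uniformly, and exponentiation together with the boundedness of $(1+kt^\nu/n)^\sigma$ for $k\le n$ proves \eqref{gamma0}. The main obstacle, as indicated, is the two-scale control of $\Delta_n(k)$; a secondary technical point is to confirm that the rate $(\log j)/j$ in Proposition~\ref{l2.2} really is uniform in the second argument, which follows from the fact that the decisive upper bound in its proof uses only $F_i(0)$ and $\Theta(0)$.
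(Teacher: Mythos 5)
Your proposal is correct and follows essentially the same route as the paper: pass to logarithms via (A2), substitute the expansion of $(1-F_j(s))^\nu$ from Proposition~\ref{l2.2}, recognize the unperturbed part as an integral comparison converging to $\log(1+kt^\nu/n)$, and control the $\varepsilon$-error separately. The bookkeeping differs slightly: the paper's proof splits the sum into three pieces $\Phi_1,\Phi_2,\Phi_3$, where $\Phi_1$ uses the ``cleaned'' denominator $\kappa_1\nu j + t^{-\nu}(1-F_n(0))^{-\nu}$ and $\Phi_2$ is the correction for replacing $(1-e^{-t(1-F_n(0))})^{-\nu}$ by $t^{-\nu}(1-F_n(0))^{-\nu}$, whereas you keep the exact $a_n=(1-s_n)^{-\nu}/(\kappa_1\nu)$ throughout and absorb the $\Phi_2$-type correction into the observation $k/a_n=(kt^\nu/n)(1+o(1))$ uniformly on $k\le n$; this is a modest streamlining. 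The genuine added value in your write-up is that the error term $\Delta_n(k)$ (the paper's $\Phi_3$, which the authors dismiss with ``one can verify'') is actually proved: your two-scale split at $j_n=\lfloor\sqrt n\rfloor$, using uniform boundedness of $\varepsilon$ for small $j$ and the uniform $O(\log j / j)$ bound from the proof of Proposition~\ref{l2.2} for larger $j$, yields a uniform $o(1)$ bound $O(t^\nu/\sqrt n)+O((\log n)^2 t^\nu/n)$ that the paper does not spell out. Your remark that the $(\log j)/j$ bound is uniform in the second argument because the decisive inequality \eqref{up} only involves $F_i(0)$ and $\Theta(0)$ is precisely the point one must check, and it holds.
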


\begin{proof}
According to the definition of $\gamma_{k}^{(0)}$,
\begin{align*}
\gamma_{k}^{(0)}\left(e^{-t(1-F_{n}(0))}\right)
= \exp\left\{-\kappa_{2}\sum_{j=0}^{k-1}\left(1-F_{j}\left(e^{-t(1-F_{n}(0))}\right)\right)^{\theta} \right\}.
\end{align*}
Noticing that  $\theta=\nu$, we have
\begin{align}\label{gamma1}
& \left(1+\frac{k}{n}t^{\nu}\right)^\sigma\gamma_{k}^{(0)}\left(e^{-t(1-F_{n}(0))}\right)\nonumber\\
&=\left(1+\frac{k}{n}t^{\nu}\right)^\sigma\exp\left\{-\kappa_{2}\sum_{j=0}^{k-1}\frac{1+\varepsilon\left(j,e^{-t(1-F_{n}(0))}\right)}{\kappa_{1}\nu j+\left(1-e^{-t(1-F_{n}(0))}\right)^{-\nu}} \right\}\nonumber\\
&=\exp\left\{-\kappa_{2}\left[\left(\Phi_1(k,n,t)-\frac{1}{\kappa_{1}\nu}\log \left(1+\frac{k}{n}t^{\nu}\right)\right)+\Phi_2(k,n,t)+\Phi_3(k,n,t)\right] \right\},
\end{align}
where
\begin{align*}
& \Phi_1(k,n,t):=\sum_{j=0}^{k-1}\frac{1}{\kappa_{1}\nu j+t^{-\nu}(1-F_{n}(0))^{-\nu}},\\
& \Phi_2(k,n,t):=\sum_{j=0}^{k-1}\left(\frac{1}{\kappa_{1}\nu j+\left(1-e^{-t(1-F_{n}(0))}\right)^{-\nu}}-\frac{1}{\kappa_{1}\nu j+t^{-\nu}(1-F_{n}(0))^{-\nu}}\right),\\
&  \Phi_3(k,n,t):= \sum_{j=0}^{k-1}\frac{\varepsilon\left(j,e^{-t(1-F_{n}(0))}\right)}{\kappa_{1}\nu j+\left(1-e^{-t(1-F_{n}(0))}\right)^{-\nu}}.
\end{align*}
Observing
\begin{align*}
\Phi_{1}(k,n,t)\geq \int_{0}^{k}\frac{1}{\kappa_{1}\nu x+t^{-\nu}(1-F_{n}(0))^{-\nu}}dx,
\end{align*}
and
\begin{align*}
\Phi_{1}(k,n,t)\leq \int_{0}^{k-1}\frac{1}{\kappa_{1}\nu x+t^{-\nu}(1-F_{n}(0))^{-\nu}}dx
+t^{\nu}(1-F_{n}(0))^{\nu}.
\end{align*}
Together with \eqref{Funi}, we obtain
\begin{align*}
\sup_{k\le n}\left|\Phi_{1}(k,n,t)- \frac{1}{\kappa_{1}\nu}\log \left(1+\frac{k}{n}t^{\nu}\right)\right|\to 0,\quad n\rightarrow\infty.
\end{align*}
Setting $x_{n,t}=t(1-F_{n}(0))$.   Recalling Proposition \ref{l2.2}, we have
\begin{align*}
x_{n,t}\sim t(\kappa_{1}\nu)^{-\frac{1}{\nu}}n^{-\frac{1}{\nu}}, \quad n\rightarrow\infty,
\end{align*}
and then for $\nu<1$,
\begin{align*}
\sup_{k\le n}|\Phi_{2}(k,n,t)|&\leq \sup_{k\le n}\sum_{j=0}^{k-1}\frac{\left(1-e^{-x_{n,t}}\right)^{-\nu}-x_{n,t}^{-\nu}}
{[\kappa_{1}\nu j+\left(1-e^{-x_{n,t}}\right)^{-\nu}][\kappa_{1}\nu j+x_{n,t}^{-\nu}]}\\
&\leq \frac{\left(1-e^{-x_{n,t}}\right)^{-\nu}-x_{n,t}^{-\nu}}{(\kappa_{1}\nu)^{2}}\sum_{j=1}^{\infty}\frac{1}{j^{2}}\\
&\leq C \left(\left(1-e^{-x_{n,t}}\right)^{-\nu}-x_{n,t}^{-\nu}\right)\\
&\rightarrow0
\end{align*}
as $n\rightarrow\infty$. If $\nu=1$,
\begin{align*}
\sup_{k\le n}|\Phi_{2}(k,n,t)|\leq \sum_{j=0}^{n-1}\big| x_{n,t}-1+e^{-x_{n,t}}\big|
\le n \big| x_{n,t}-1+e^{-x_{n,t}}\big|\rightarrow0, \quad n\rightarrow\infty.
\end{align*}
Finally, applying \eqref{varebound}, one can verify that
\begin{align*}
\lim\limits_{n\rightarrow\infty}\sup_{k\leq n}|\Phi_{3}(k,n,t)|=0.
\end{align*}

We end the proof.
\end{proof}

Observe that\begin{align*}
\mathbb{E}\left[e^{-t(1-F_{n}(0))Z_{n}}\right]&=G_{0}\left(F_{n}\left(e^{-t(1-F_{n}(0))}\right)\right)
\prod_{j=0}^{n-1}\left[B\left(F_{j}\left(e^{-t(1-F_{n}(0))}\right)\right)\right]\\
&=\left[1-\kappa_{0}\left(1-F_{n}\left(e^{-t(1-F_{n}(0))}\right)\right)^{\delta}\right]
\gamma_{n}^{(0)}\left(e^{-t(1-F_{n}(0))}\right).
\end{align*}
Letting $k=n$ in Theorem~\ref{Lgamma1}, together with Proposition~\ref{l2.2},  we then obtain

\begin{corollary}\label{s1}
Suppose  (A1)--(A3) hold. If $\theta=\nu$,  then for each $t>0$,
\begin{align*}
\lim_{n\to \infty}\mathbb{E}\left[e^{-t(1-F_{n}(0))Z_{n}}\right]=(1+ t^{\nu})^{-\sigma}.
 \end{align*}
\end{corollary}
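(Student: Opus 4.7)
The identity displayed just before the corollary already factors the Laplace functional as
\[
\mathbb{E}\bigl[e^{-t(1-F_{n}(0))Z_{n}}\bigr]
=\bigl[1-\kappa_{0}\bigl(1-F_{n}(s_{n})\bigr)^{\delta}\bigr]\,\gamma_{n}^{(0)}(s_{n}),
\qquad s_{n}:=e^{-t(1-F_{n}(0))},
\]
so my plan is simply to take the limit of the two factors separately. The product factor is the substantive piece, while the $G_0$-factor is a small correction that should vanish.

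For $\gamma_{n}^{(0)}(s_{n})$, I will invoke Theorem~\ref{Lgamma1} in the special case $k=n$. The uniform statement specialised to $k=n$ reads $(1+t^{\nu})^{\sigma}\gamma_{n}^{(0)}(s_{n})\to 1$, hence $\gamma_{n}^{(0)}(s_{n})\to(1+t^{\nu})^{-\sigma}$, which is exactly the desired limiting transform. This is the whole content of the corollary once the bracketed factor is shown to tend to $1$.

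To show $1-\kappa_{0}(1-F_{n}(s_{n}))^{\delta}\to 1$, it suffices to prove that $1-F_{n}(s_{n})\to 0$. I will apply Proposition~\ref{l2.2} at $t=s_{n}$: since $1-s_{n}\sim t(1-F_{n}(0))$ and $(1-F_{n}(0))^{\nu}\sim 1/(\kappa_{1}\nu n)$ by Proposition~\ref{l2.2} at $t=0$, the denominator in \eqref{Funi} satisfies
\[
\kappa_{1}\nu n+(1-s_{n})^{-\nu}\sim \kappa_{1}\nu n\bigl(1+t^{-\nu}\bigr),
\]
so $(1-F_{n}(s_{n}))^{\nu}$ is of order $1/n$ and in particular tends to zero. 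Raising this to the power $\delta/\nu$ gives $(1-F_{n}(s_{n}))^{\delta}\to 0$, as required.

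I do not expect any genuine obstacle: the nontrivial analytic work has already been carried out in Theorem~\ref{LFuniform}, Proposition~\ref{l2.2} and Theorem~\ref{Lgamma1}. The only thing to be careful about is that the argument $s_{n}$ depends on $n$, which is why Theorem~\ref{Lgamma1} is formulated with a supremum over $k\le n$ and Proposition~\ref{l2.2} with uniform convergence of $\varepsilon(n,\cdot)$; with these uniform versions in hand, substituting $s_{n}$ and passing to the limit is immediate.
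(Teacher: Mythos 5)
Your proof is correct and follows essentially the same route as the paper: factor $\mathbb{E}[e^{-t(1-F_n(0))Z_n}]$ into the $G_0$-correction times $\gamma_n^{(0)}(s_n)$, invoke Theorem~\ref{Lgamma1} at $k=n$ for the second factor, and use Proposition~\ref{l2.2} to show $1-F_n(s_n)\to 0$ for the first. The paper leaves the second step implicit (``together with Proposition~\ref{l2.2}''), so your spelled-out asymptotics for $(1-F_n(s_n))^{\nu}$ fill in exactly the detail the authors omitted.
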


Similarly to Theorem~\ref{Lgamma1}, we have
\begin{theorem}\label{Lgamma2}
Suppose  (A1) and (A2) hold. If $\theta<\nu$,  then for each $t>0$,
\begin{align}\label{gamma00}
\lim_{n\to \infty}\sup\limits_{k\leq n}\bigg|e^{\kappa_{2}t^{\theta}\frac{k}{n}}\cdot \gamma_{k}^{(0)}\left(e^{-tn^{-\frac{1}{\theta}}}\right)-1\bigg|
=0.
\end{align}
\end{theorem}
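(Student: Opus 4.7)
The plan is to adapt the scheme of the proof of Theorem~\ref{Lgamma1} to the regime $\theta<\nu$. Using (A2),
$$\gamma_{k}^{(0)}\bigl(e^{-tn^{-1/\theta}}\bigr)=\exp\!\left\{-\kappa_{2}\sum_{j=0}^{k-1}\bigl(1-F_{j}(s_n)\bigr)^{\theta}\right\},\qquad s_n:=e^{-tn^{-1/\theta}},$$
so it suffices to establish
$$\sup_{k\le n}\left|\sum_{j=0}^{k-1}(1-F_{j}(s_n))^{\theta}-t^{\theta}\frac{k}{n}\right|\xrightarrow[n\to\infty]{}0.$$
Since $1-s_n\sim tn^{-1/\theta}$, one has $(1-s_n)^{\theta}=t^{\theta}n^{-1}(1+o(1))$ and $(1-s_n)^{-\nu}\sim t^{-\nu}n^{\nu/\theta}$.

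Applying Proposition~\ref{l2.2} and factoring $(1-s_n)^{\theta}$ out of each summand, the key identity is
$$(1-F_{j}(s_n))^{\theta}=(1-s_n)^{\theta}\left(\frac{1+\varepsilon(j,s_n)}{1+\kappa_{1}\nu j(1-s_n)^{\nu}}\right)^{\theta/\nu}.$$
The crucial observation is that, for every $j\le n$ and $\theta<\nu$, $\kappa_{1}\nu j(1-s_n)^{\nu}=O(n^{1-\nu/\theta})\to 0$; together with $\varepsilon(j,s_n)\to 0$ this forces the bracket to tend uniformly to $1$. Using $|(1+x)^{\theta/\nu}-1|\le C|x|$ for bounded $|x|$, the deviation of each bracket from $1$ is dominated by $C|\varepsilon(j,s_n)|+C\kappa_{1}\nu j(1-s_n)^{\nu}$.

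Summed over $0\le j\le n-1$, \eqref{varebound} gives an $\varepsilon$-contribution of order $\sum_{j=1}^{n}(\log j)/j=O((\log n)^{2})$, and the second contribution is $O(n^{2}\cdot n^{-\nu/\theta})=O(n^{2-\nu/\theta})$. Both are $o(n)$ precisely because $\theta<\nu$ (equivalently $2-\nu/\theta<1$). Multiplying by the prefactor $(1-s_n)^{\theta}=O(1/n)$ yields an $o(1)$ bound uniform in $k\le n$, and combining with $k(1-s_n)^{\theta}=t^{\theta}k/n+O(n^{-1/\theta})$ proves the target; exponentiating gives the theorem.

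The main technical point, and the reason the argument of Theorem~\ref{Lgamma1} must be replaced, is the inequality $2-\nu/\theta<1$. When $\theta=\nu$ the quantities $\kappa_{1}\nu j$ and $(1-s_n)^{-\nu}$ become comparable for $j$ of order $n$, so one cannot pull $(1-s_n)^{\theta}$ out as a uniform prefactor; in the present case $\theta<\nu$, the rescaled argument $s_n$ is so close to $1$ that $(1-s_n)^{-\nu}$ dominates $\kappa_{1}\nu j$ throughout $j\le n$. This is exactly what permits the simple factorisation above and is also responsible for the pure exponential rescaling $e^{\kappa_{2}t^{\theta}k/n}$ in place of the algebraic factor $(1+(k/n)t^{\nu})^{\sigma}$ appearing in Theorem~\ref{Lgamma1}.
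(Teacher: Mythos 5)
Your proof is correct and, since the paper omits the argument with the remark ``Similarly to Theorem~\ref{Lgamma1},'' this is the proof the authors intend; the skeleton is identical (take logarithms, invoke Proposition~\ref{l2.2} for $(1-F_j(s_n))^{\nu}$, and control the resulting sum uniformly in $k\le n$). Your version is in fact slightly cleaner than a literal transcription of the proof of Theorem~\ref{Lgamma1}: because $\theta<\nu$, the quantity $\kappa_1\nu j(1-s_n)^{\nu}=O(n^{1-\nu/\theta})$ vanishes uniformly over $j\le n$, so the denominator in Proposition~\ref{l2.2} is $(1-s_n)^{-\nu}(1+o(1))$ uniformly. This lets you replace the integral-approximation step that produces the logarithm $\frac{1}{\kappa_1\nu}\log(1+\frac{k}{n}t^{\nu})$ (the $\Phi_1$-term in Theorem~\ref{Lgamma1}) by the direct observation $\sum_{j<k}(1-F_j(s_n))^{\theta}\approx k(1-s_n)^{\theta}\approx t^{\theta}k/n$, and it also transparently explains why the algebraic prefactor of the $\theta=\nu$ case degenerates to the pure exponential $e^{\kappa_2 t^{\theta}k/n}$ here.

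Two small remarks on exposition rather than substance. First, when you write ``Both are $o(n)$ precisely because $\theta<\nu$,'' only the term $O(n^{2-\nu/\theta})$ actually needs $\theta<\nu$; the $\varepsilon$-contribution $O((\log n)^2)$ is $o(n)$ unconditionally. Second, to invoke $|(1+x)^{\theta/\nu}-1|\le C|x|$ you need $|x|$ bounded away from $-1$ uniformly over $j\le n$ and $n$ large; this holds because $\sup_{j\le n}|\varepsilon(j,s_n)|\to 0$ (combine the uniform-in-$t$ statement of Proposition~\ref{l2.2} with the pointwise fact $\varepsilon(j,t)\to 0$ as $t\uparrow 1$ for each fixed $j$) and $\sup_{j\le n}\kappa_1\nu j(1-s_n)^{\nu}\to 0$, so this is fine but is worth stating explicitly. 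With those clarifications the argument is complete.
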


Setting $k=n$ in Theorem~\ref{Lgamma2} yields

\begin{corollary}\label{s2}
 Suppose  (A1)--(A3) hold. If $\theta<\nu$,  then for each $t>0$,
\begin{align*}
\lim_{n\to \infty}\mathbb{E}\left[e^{-tZ_{n}/n^{\frac{1}{\theta}}}\right]=e^{-\kappa_{2}t^{\theta}}.
 \end{align*}
\end{corollary}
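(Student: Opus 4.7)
The plan is to factorise the Laplace transform via \eqref{DefHn} and the definition \eqref{D1} of $\gamma_n^{(0)}$, and then dispose of the two resulting factors separately. Setting $s_n:=e^{-tn^{-1/\theta}}$, I would start from
\begin{align*}
\mathbb{E}\!\left[e^{-tZ_n/n^{1/\theta}}\right]=H_n(s_n)=G_0(F_n(s_n))\,\gamma_n^{(0)}(s_n),
\end{align*}
so specialising Theorem~\ref{Lgamma2} to $k=n$ immediately yields $\gamma_n^{(0)}(s_n)\to e^{-\kappa_2 t^\theta}$, which takes care of the second factor.

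For the first factor, Assumption (A3) gives $G_0(F_n(s_n))=1-\kappa_0(1-F_n(s_n))^{\delta}$, so it suffices to prove $1-F_n(s_n)\to 0$. The idea is to read off from Proposition~\ref{l2.2} that
\begin{align*}
(1-F_n(s_n))^{\nu}=\frac{1+\varepsilon(n,s_n)}{\kappa_1\nu n+(1-s_n)^{-\nu}},
\end{align*}
and then use the elementary estimate $1-s_n\sim t n^{-1/\theta}$ to obtain $(1-s_n)^{-\nu}\sim t^{-\nu}n^{\nu/\theta}$. Since $\theta<\nu$ forces $\nu/\theta>1$, the second term in the denominator above dominates $\kappa_1\nu n$, which will give $(1-F_n(s_n))^\nu\sim t^\nu n^{-\nu/\theta}$, and in particular $1-F_n(s_n)\to 0$. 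Multiplying the two limits will then produce the claimed value $e^{-\kappa_2 t^\theta}$.

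The only (mild) subtlety is deciding which of the two summands $\kappa_1\nu n$ and $(1-s_n)^{-\nu}$ governs the denominator in Proposition~\ref{l2.2}; the assumption $\theta<\nu$ is precisely what makes the second one win. This same dichotomy explains why the scaling is $n^{-1/\theta}$ here rather than $1-F_n(0)\asymp n^{-1/\nu}$ as in Corollary~\ref{s1}: in the present regime the offspring fluctuations are negligible compared with the accumulated immigration, so the limit law is dictated by $\gamma_n^{(0)}$ alone and is the one-sided stable law of index $\theta$ (degenerate when $\theta=1$).
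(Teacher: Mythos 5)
Your proposal is correct and follows essentially the same route the paper intends: factorise $\mathbb{E}[e^{-tZ_n/n^{1/\theta}}]=G_0(F_n(s_n))\,\gamma_n^{(0)}(s_n)$ via \eqref{DefHn} and \eqref{D1}, apply Theorem~\ref{Lgamma2} at $k=n$ to the $\gamma_n^{(0)}$ factor, and show the $G_0$ factor tends to $1$. The only remark worth making is that for the first factor the full strength of Proposition~\ref{l2.2} is not needed: since $F_n$ is increasing, $1-F_n(s_n)\le 1-F_n(0)\to 0$ already gives $G_0(F_n(s_n))\to 1$ under (A3); your sharper estimate $(1-F_n(s_n))^\nu\sim t^\nu n^{-\nu/\theta}$ is correct but more than what is required.
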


Finally, we state the result for the case $\theta>\nu$ :

\begin{theorem}\label{Llimit3}
 Suppose  (A1)--(A3) hold. If $\theta>\nu$, then
\begin{align*}
Z_{n}\stackrel{d}\rightarrow \psi.
\end{align*}
\end{theorem}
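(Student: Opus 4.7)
The plan is to identify the limiting law through its probability generating function. By \eqref{DefHn},
\begin{align*}
H_n(x) = G_0(F_n(x)) \prod_{j=0}^{n-1} B(F_j(x)) = G_0(F_n(x)) \exp\Bigl(-\kappa_2 \sum_{j=0}^{n-1} (1-F_j(x))^\theta\Bigr),
\end{align*}
so I would first show pointwise convergence of $H_n(x)$ on $[0,1)$ and then verify that the limit is a proper probability generating function.

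For the first step, fix $x\in[0,1)$. Since $F_n(x)\to 1$ and $G_0(1)=1$ by Assumption (A3), the factor $G_0(F_n(x))\to 1$. For the product, use the monotonicity $1-F_j(x)\le 1-F_j(0)$ together with Proposition~\ref{l2.2}, which gives
\begin{align*}
(1-F_j(x))^\theta \le (1-F_j(0))^\theta \asymp j^{-\theta/\nu}\quad\text{as } j\to\infty.
\end{align*}
Since $\theta>\nu$, the series $\sum_{j\ge0}(1-F_j(x))^\theta$ converges, and therefore
\begin{align*}
H_n(x) \;\longrightarrow\; g(x):= \exp\Bigl(-\kappa_2 \sum_{j=0}^{\infty} (1-F_j(x))^\theta\Bigr), \quad x\in[0,1),
\end{align*}
with $H_n(1)=1$.

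For the second step, I would verify that $g$ is the generating function of an honest (non-defective) distribution by checking $\lim_{x\to 1^-} g(x)=1$. This is an application of dominated convergence: for each $j$, $(1-F_j(x))^\theta\to 0$ as $x\uparrow 1$, and $(1-F_j(x))^\theta$ is dominated by the summable majorant $(1-F_j(0))^\theta$ (independent of $x$). Hence the series inside the exponent converges to $0$, giving $g(x)\to 1$.

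Finally, the continuity theorem for probability generating functions yields $Z_n \stackrel{d}{\to}\psi$, where $\psi$ is the $\mathbb{N}$-valued random variable with generating function $g$. The argument is essentially a parameter-dependent version of the $x=0$ computation already performed in Lemma~\ref{LB2gamma}(i), and the only genuine point to check carefully is the continuity of $g$ at $1$, which is handled as above; no further technical obstacle is anticipated.
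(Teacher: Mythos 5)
Your proof is correct but takes a genuinely more hands-on route than the paper. The paper's proof is a one-line citation of Pakes~\cite{P75}: $\mathbf{Z}$ has a limiting stationary distribution if and only if $\sum_{n\ge 1}\bigl(1-B(F_n(0))\bigr)<\infty$; under (A2) that sum is comparable to $\sum_n(1-F_n(0))^\theta\asymp\sum_n n^{-\theta/\nu}$, which converges exactly when $\theta>\nu$ by Proposition~\ref{l2.2}. You instead compute the limiting generating function explicitly: $H_n(x)\to g(x)=\exp\bigl(-\kappa_2\sum_{j\ge 0}(1-F_j(x))^\theta\bigr)$ on $[0,1)$ using $1-F_j(x)\le 1-F_j(0)$ and the summability of $(1-F_j(0))^\theta$, then check that $g$ is a proper generating function via $g(x)\to 1$ as $x\uparrow 1$ (dominated convergence with the same majorant), and conclude by the continuity theorem for generating functions. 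Your version is self-contained, avoids reliance on an external criterion, and delivers an explicit formula for the generating function of $\psi$, which the paper's argument does not; the paper's version is shorter. As you note, your computation is essentially the parameter-dependent version of Lemma~\ref{LB2gamma}(i), and the one genuinely new step beyond that lemma --- the properness check at $x=1$ --- is handled correctly.
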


\begin{proof}
It is known from \cite{P75} that $\mathbf{Z}$ has a limit stationary distribution iff
\begin{align*}
\sum\limits_{n=1}^{\infty}(1-B(F_{n}(0)))<\infty,
\end{align*}
which holds by Assumption (A2) and  $\theta>\nu$.

\end{proof}

\section{Conditional limit theorems of $\mathbf{W}$}

For convenience, in the following we denote
\begin{align}\label{uns}
u_n=\mathbb{P}(W_{n}>0)\sim n^{-\alpha}\mathcal{L}_{\ast}(n),\quad n\rightarrow\infty,
\end{align}
where $\alpha\in[0,1)$ and $\mathcal{L}_{\ast}$ are specified by Theorem~\ref{thA2}, according to the relation of $\sigma, \nu,\theta,\delta$.

\subsection{Main results}

\begin{theorem}\label{thA3}
Suppose  (A1)--(A3) hold  and $\theta<\nu$. Then
\begin{align*}
\lim\limits_{n\rightarrow\infty}\mathbb{E}\left(e^{-sn^{-1/\theta}W_{n}}\big|W_{n}>0\right)= e^{-\kappa_{2}s^{\theta}}.
\end{align*}
\end{theorem}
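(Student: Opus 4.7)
The plan is to reduce the conditional Laplace transform of $n^{-1/\theta}W_n$ given $\{W_n>0\}$ to that of $n^{-1/\theta}Z_n$, which has already been identified in Corollary~\ref{s2}. The key structural observation is that on $\{W_n>0\}$ every offspring sum $\Lambda_k$ is strictly positive for $k\le n$, and an easy induction in $k$ then yields $W_k=Z_k$ for all $k\le n$. Combined with the case $\theta<\nu$ of Theorem~\ref{thA2}, which gives $u_n:=\mathbb{P}(W_n>0)\to K_0\in(0,1)$, this allows one to write
\[
\mathbb{E}\bigl[e^{-sn^{-1/\theta}W_n}\,;\,W_n>0\bigr]=\mathbb{E}\bigl[e^{-sn^{-1/\theta}Z_n}\bigr]-\mathbb{E}\bigl[e^{-sn^{-1/\theta}Z_n}\,;\,W_n=0\bigr].
\]
Corollary~\ref{s2} handles the first term; it remains to show that the second tends to $(1-K_0)e^{-\kappa_{2}s^{\theta}}$, whence dividing by $u_n\to K_0$ yields the claim.

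Next I would decompose $\{W_n=0\}=\bigsqcup_{m=1}^{n}\{\zeta=m\}$ according to the extinction time $\zeta$ of $\mathbf{W}$ (recall $\zeta\ge 1$ under the standing assumption $Z_0>0$). On $\{\zeta=m\}$ one has $W_{m-1}=Z_{m-1}>0$ and $\Lambda_m=0$, so $Z_m=Y_m$; moreover $\{\zeta=m\}$ depends only on $Z_0$, $Y_1,\ldots,Y_{m-1}$, and $\xi_{k,i}$ for $k\le m$, and is therefore independent of $Y_m,Y_{m+1},\ldots$ and of $\xi_{k,i}$ for $k>m$. In particular $Y_m$ conditioned on $\{\zeta=m\}$ still has generating function $B$. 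Applying the Markov property of $\mathbf{Z}$ at time $m$ and averaging over $Y_m$, with $s_n:=e^{-sn^{-1/\theta}}$,
\[
\mathbb{E}\bigl[e^{-sn^{-1/\theta}Z_n}\,\big|\,\zeta=m\bigr]=B\bigl(F_{n-m}(s_n)\bigr)\,\gamma_{n-m}^{(0)}(s_n)=\gamma_{n-m+1}^{(0)}(s_n).
\]

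For each fixed $m$, Theorem~\ref{Lgamma2} applied at $k=n-m+1$ (so that $k/n\to 1$) gives $\gamma_{n-m+1}^{(0)}(s_n)\to e^{-\kappa_{2}s^{\theta}}$. Splitting
\[
\mathbb{E}\bigl[e^{-sn^{-1/\theta}Z_n}\,;\,W_n=0\bigr]=\sum_{m=1}^{M}\mathbb{P}(\zeta=m)\,\gamma_{n-m+1}^{(0)}(s_n)+\sum_{m=M+1}^{n}\mathbb{P}(\zeta=m)\,\gamma_{n-m+1}^{(0)}(s_n),
\]
the finite sum converges to $(1-u_M)e^{-\kappa_{2}s^{\theta}}$ as $n\to\infty$, while the tail is dominated by $\mathbb{P}(M<\zeta\le n)=u_M-u_n$. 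Letting first $n\to\infty$ and then $M\to\infty$, and using $u_M\to K_0$, the right-hand side converges to $(1-K_0)e^{-\kappa_{2}s^{\theta}}$, as required.

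The main technical point is the decomposition by $\zeta$ together with the verification that on $\{\zeta=m\}$ one has $Z_m=Y_m$ with $Y_m$ independent of the conditioning event, so that the conditional generating function of $Z_n$ collapses cleanly to $\gamma_{n-m+1}^{(0)}(s_n)$. Once this is in place, one is left with a routine truncation that feeds the uniform-in-$k$ estimate of Theorem~\ref{Lgamma2} (needed at $k=n-m+1\sim n$) into the displayed split sum, with tail control furnished by the convergence $u_n\to K_0$.
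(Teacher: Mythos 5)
Your proof is correct, but it takes a genuinely different route from the paper's. The paper works directly with the generating-function identity stated at the start of Section~5.2, namely $G_n(s)=\sum_{k=0}^{n-1}G_{n-1-k}(0)\bigl(\gamma_k^{(0)}(s)-\gamma_{k+1}^{(0)}(s)\bigr)+G_0(F_n(s))\gamma_n^{(0)}(s)$, which leads to the split $1-\mathbb{E}(t^{W_n}|W_n>0)=\Xi_1(t,n)+\Xi_2(t,n)$ in \eqref{Wndecom}. The paper then shows $\Xi_1\to 0$ (easy, using $u_n\to K_0$ and $1-F_n(t)\le 1-t$), and handles $\Xi_2$ by truncating the weighted telescoping sum at $k=n-N$, using $u_{n-k}/u_n\to 1$ for the main block and bounding the boundary block by $CN/n$; the main block telescopes to $1-\gamma_{n-N}^{(0)}(e^{-sn^{-1/\theta}})\to 1-e^{-\kappa_2 s^\theta}$ via Theorem~\ref{Lgamma2}. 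You instead bypass the generating-function recursion entirely and argue probabilistically: you use the pathwise coupling $W_n=Z_n$ on $\{W_n>0\}$, write $\mathbb{E}[e^{-sn^{-1/\theta}Z_n};W_n=0]$ as a mixture over the extinction time $\zeta=m$ of the absorbed process, and show via the Markov property and the independence of $Y_m$ from $\{\zeta=m\}$ that each conditional piece equals $\gamma_{n-m+1}^{(0)}(s_n)$. Your truncation in $m$, the dominated-convergence style argument with the tail bounded by $u_M-u_n$, and the final cancellation $\bigl(e^{-\kappa_2 s^\theta}-(1-K_0)e^{-\kappa_2 s^\theta}\bigr)/K_0$ are all sound; they feed into exactly the same technical inputs (Theorem~\ref{Lgamma2} uniformly in $k\le n$, Corollary~\ref{s2}, and $u_n\to K_0>0$ from Theorem~\ref{thA2}). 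The two arguments are essentially an Abel summation apart: what your route buys is that the identity is derived transparently from the definition of $\mathbf W$ and the branching Markov property, rather than being read off from a recursion quoted without derivation, at the small cost of having to verify the independence structure of $\{\zeta=m\}$ explicitly (which you do correctly). Note that your argument, like the paper's, implicitly reads \eqref{DefWn} with $W_{n-1}$ rather than $Z_{n-1}$ in the offspring sum, so that $\mathbf W$ is absorbed at zero and $\{W_n>0\}=\{\zeta>n\}$; this is the standard Zubkov convention and is what makes $\mathbb{P}(W_n>0)=\mathbb{P}(\zeta>n)=u_n$ consistent with Theorem~\ref{thA2}.
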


\begin{theorem}\label{thA4}
 Suppose  (A1)--(A3) hold and $\theta=\nu$. \\
(a)~If  $\sigma\ge 1$, then
\begin{align*}
\lim\limits_{n\rightarrow\infty}\mathbb{E}\left(e^{-s(1-F_{n}(0))W_{n}}\big|W_{n}>0\right)= \left(1+s^{\nu}\right)^{-\sigma};
\end{align*}
(b)~If $\sigma<1$, then
\begin{align*}
\lim\limits_{n\rightarrow\infty}\mathbb{E}\left(e^{-s(1-F_{n}(0))W_{n}}\big|W_{n}>0\right)=
\Lambda(s,\delta,\nu),
\end{align*}
where
\begin{align*}
\Lambda(s,\delta,\nu)=
\begin{cases}
1-\frac{\kappa_{0}}{K_{5}}\left(\frac{1}{\kappa_{1}\nu}\right)^{\delta/\nu}
\frac{s^{\delta}}{\left(1+s^{\nu}\right)^{\sigma+\delta/\nu}}
-\sigma s^{\nu}
\int_{0}^{1}\frac{(1-x)^{-\delta/\nu}}{(1+s^{\nu}x)^{\sigma+1}}dx, & \sigma<1-\delta/\nu,\\
1
-\sigma s^{\nu}
\int_{0}^{1}\frac{(1-x)^{\sigma-1}}{(1+s^{\nu}x)^{\sigma+1}}dx, & \sigma\geq 1-\delta/\nu,\\
\end{cases}
\end{align*}
with $K_5$ given by Theorem~\ref{thA2}. 
\end{theorem}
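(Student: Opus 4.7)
The plan is to first derive an explicit expression for $Q_n(z):=\mathbb{E}(z^{W_n})$ and then convert the conditional Laplace transform into a sum to which the earlier asymptotics apply. Conditioning on $\mathcal{F}_{n-1}$ and using \eqref{DefWn} yields $\mathbb{E}(z^{W_n}\mid\mathcal{F}_{n-1})=(1-B(z))F(0)^{W_{n-1}}+B(z)F(z)^{W_{n-1}}$; since $\mathbb{E}[F(0)^{W_{n-1}}]=\mathbb{P}(\Lambda_n=0)=1-u_n$, this gives the recursion
\begin{align*}
Q_n(z)=(1-B(z))(1-u_n)+B(z)\,Q_{n-1}(F(z)),\qquad Q_0=G_0.
\end{align*}
Iterating back to $Q_0$, invoking Assumption~(A3), and using the telescoping identity $\sum_{j=0}^{n-1}(\gamma_j^{(0)}(z)-\gamma_{j+1}^{(0)}(z))=1-\gamma_n^{(0)}(z)$, I obtain the key formula
\begin{align*}
1-Q_n(z)=\kappa_0(1-F_n(z))^{\delta}\gamma_n^{(0)}(z)+\sum_{j=0}^{n-1}\bigl(\gamma_j^{(0)}(z)-\gamma_{j+1}^{(0)}(z)\bigr)\,u_{n-j}.
\end{align*}
Specialising to $z=e^{-s\lambda_n}$ with $\lambda_n:=1-F_n(0)$, and using $\mathbb{E}(e^{-s\lambda_n W_n}\mid W_n>0)=(Q_n(e^{-s\lambda_n})-(1-u_n))/u_n$, rewrites the conditional Laplace transform as
\begin{align*}
1-\frac{\kappa_0(1-F_n(e^{-s\lambda_n}))^{\delta}\gamma_n^{(0)}(e^{-s\lambda_n})}{u_n}-\frac{1}{u_n}\sum_{j=0}^{n-1}\Delta_j(n,s)\,u_{n-j},
\end{align*}
where $\Delta_j(n,s):=\gamma_j^{(0)}(e^{-s\lambda_n})\bigl(1-B(F_j(e^{-s\lambda_n}))\bigr)$.

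Next I would pin down the two non-trivial terms with the earlier results. Proposition~\ref{l2.2} combined with $1-e^{-s\lambda_n}\sim s\lambda_n$ gives $(1-F_n(e^{-s\lambda_n}))^{\nu}\sim \lambda_n^{\nu}s^{\nu}/(1+s^{\nu})$; Theorem~\ref{Lgamma1} at $k=n$ gives $\gamma_n^{(0)}(e^{-s\lambda_n})\to(1+s^{\nu})^{-\sigma}$; and $\lambda_n^{\delta}\sim(\kappa_1\nu n)^{-\delta/\nu}$. Inserting the relevant rate of $u_n$ from Theorem~\ref{thA2} shows that the first term vanishes in case~(a) and in case~(b) when $\sigma\geq 1-\delta/\nu$, and converges to $(\kappa_0/K_5)(\kappa_1\nu)^{-\delta/\nu}s^{\delta}/(1+s^{\nu})^{\sigma+\delta/\nu}$ in the sub-case $\sigma<1-\delta/\nu$ (where $u_n\sim K_5 n^{-\delta/\nu}$), exactly matching the first correction in $\Lambda(s,\delta,\nu)$. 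For the sum, setting $j=\lfloor xn\rfloor$ and combining Proposition~\ref{l2.2}, Assumption~(A2) and Theorem~\ref{Lgamma1} yields
\begin{align*}
n\,\Delta_{\lfloor xn\rfloor}(n,s)\longrightarrow \frac{\sigma\, s^{\nu}}{(1+xs^{\nu})^{\sigma+1}}\qquad\text{uniformly on compact subsets of $(0,1)$,}
\end{align*}
while Theorem~\ref{thA2} combined with the uniform convergence theorem for regularly varying functions gives $u_{n-\lfloor xn\rfloor}/u_n\to\phi(1-x)$, with $\phi(y)=1$ in case~(a), $\phi(y)=y^{-\delta/\nu}$ in case~(b) with $\sigma<1-\delta/\nu$, and $\phi(y)=y^{\sigma-1}$ otherwise (the logarithm cancels exactly in the boundary case $\sigma+\delta/\nu=1$). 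Writing the sum as $(1/n)\sum_j(n\Delta_j)(u_{n-j}/u_n)$, a Riemann sum argument then gives
\begin{align*}
\frac{1}{u_n}\sum_{j=0}^{n-1}\Delta_j(n,s)\,u_{n-j}\longrightarrow \sigma s^{\nu}\int_0^1\frac{\phi(1-x)}{(1+xs^{\nu})^{\sigma+1}}\,dx.
\end{align*}
The elementary identity $\sigma s^{\nu}\int_0^1(1+xs^{\nu})^{-\sigma-1}\,dx=1-(1+s^{\nu})^{-\sigma}$ recovers $(1+s^{\nu})^{-\sigma}$ in case~(a), and substituting the respective $\phi$ in case~(b) reproduces $\Lambda(s,\delta,\nu)$ on the nose.

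The main obstacle is justifying the Riemann sum limit in case~(b), where the integrand is singular at $x=1$ with rate $(1-x)^{-\delta/\nu}$ (when $\sigma<1-\delta/\nu$, forced to be integrable since $\sigma>0$ implies $\delta/\nu<1$) or $(1-x)^{\sigma-1}$ (when $\sigma\geq 1-\delta/\nu$ with $\sigma<1$). The scheme is to split the sum at $j=\lfloor(1-\eta)n\rfloor$ for small $\eta>0$. On the bulk $j\leq(1-\eta)n$, the uniform control of $n\Delta_j$ from Theorem~\ref{Lgamma1} and a Potter-type domination for the regularly varying $u_n$ allow dominated convergence to conclude. For the tail $j>(1-\eta)n$, the uniform bound $n\,\Delta_j(n,s)\leq C$ obtained from $1-B(F_j)\leq \kappa_2(1-F_j)^{\nu}$ and Proposition~\ref{l2.2}, together with $u_{n-j}\leq 1$ and the regular-variation estimate $\sum_{k\leq\eta n}u_k\asymp\eta^{1-\alpha}n\,u_n$ (where $\alpha\in[0,1)$ is the index in \eqref{uns}, so $1-\alpha>0$ in every regime arising here), bounds the tail contribution by $O(\eta^{1-\alpha})$. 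Letting $n\to\infty$ first and then $\eta\downarrow 0$ closes the argument.
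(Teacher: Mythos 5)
Your proposal is correct and follows essentially the same route as the paper: you arrive at the same decomposition of $1-\mathbb{E}(e^{-s(1-F_n(0))W_n}\mid W_n>0)$ into a boundary term and a sum, control the boundary term via Proposition~\ref{l2.2}, Theorem~\ref{Lgamma1} and the $u_n$-asymptotics from Theorem~\ref{thA2}, and handle the sum as a Riemann sum with the singular end cut off. One small point worth noting: your recursion gives $u_{n-j}$ where the paper's displayed formula has $u_{n-1-j}$ (an off-by-one that does not affect the asymptotics), and your derivation of the decomposition via the one-step conditioning is the cleaner way to get it.
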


\subsection{Proofs}

For the convenience of description, denote $G_{n}(s):=\mathbb{E}(s^{W_{n}})$. By \eqref{DefWn},
\begin{align*}
G_{n}(s)=\sum\limits_{k=0}^{n-1}G_{n-1-k}(0)\left(\gamma_{k}^{(0)}(s)-\gamma_{k+1}^{(0)}(s)\right)
+G_{0}(F_{n}(s))\gamma_{n}^{(0)}(s).
\end{align*}
Then,
\begin{align}\label{Wndecom}
1-\mathbb{E}\left(t^{W_{n}}\bigg|W_{n}>0\right)&=\frac{\gamma_{n}^{(0)}(t)(1-G_{0}(F_n(t)))}{u_{n}}
+\sum_{k=1}^{n}\frac{u_{n-k}}{u_{n}}\left(\gamma_{k-1}^{(0)}(t)-\gamma_{k}^{(0)}(t)\right) \nonumber\\
&:=\Xi_{1}(t,n) + \Xi_{2}(t,n).
\end{align}

\textbf{Proof of Theorem \ref{thA3}.}
By Theorem \ref{thA2}, $u_{n}\rightarrow K_0$ ($n\rightarrow\infty$).  By (A3) and $1-F_n(t)\le F_n'(1)(1-t) (t\in (0,1))$,  we obtain as $n\rightarrow\infty$,
\begin{align}\label{Xi1}
\Xi_{1}\big(e^{-sn^{-1/\theta}},n\big)
=\frac{\kappa_{0}\gamma_{n}^{(0)}\left(e^{-sn^{-1/\theta}}\right)}{u_{n}}
\left(1-F_n\left(e^{-sn^{-1/\theta}}\right)\right)^{\delta}
 \leq C\left(1-e^{-sn^{-1/\theta}}\right)^{\delta}\to 0.
\end{align}
For any $\varepsilon>0$, there exists $N>0$, such that for $k>N$, $K_0<u_k<K_0+\varepsilon$. Then
\begin{align}\label{xi22}
\Xi_{2}\big(e^{-sn^{-1/\theta}},n\big)&\geq \frac{K_0}{K_0+\varepsilon}\sum_{k=1}^{n-N}\left[\gamma_{k-1}^{(0)}\left(e^{-sn^{-1/\theta}}\right)-\gamma_{k}^{(0)}\left(e^{-sn^{-1/\theta}}\right)\right] \nonumber\\ &~~~+\sum_{k=n-N+1}^{n}\frac{u_{n-k}}{u_{n}}\left[\gamma_{k-1}^{(0)}\left(e^{-sn^{-1/\theta}}\right)-\gamma_{k}^{(0)}\left(e^{-sn^{-1/\theta}}\right)\right] \nonumber\\
&:=\frac{K_0}{K_0+\varepsilon}S_{1}(s,n,\theta)+S_{2}(s,n,\theta),
\end{align}
and
\begin{align}\label{xi222}
\Xi_{2}\big(e^{-sn^{-1/\theta}},n\big)\leq \frac{K_0+\varepsilon}{K_0}S_{1}(s,n,\theta)+S_{2}(s,n,\theta).
\end{align}
On the one hand, by $1-B(F_k(t))\leq  C(1-t)^{\theta} (t\in (0,1))$, we have
\begin{align}\label{s22}
S_{2}(s,n,\theta)&\leq C\sum_{k=n-N+1}^{n}\gamma_{k-1}^{(0)}\left(e^{-sn^{-1/\theta}}\right)
\left[1-B\left(F_{k-1}\left(e^{-sn^{-1/\theta}}\right)\right)\right]\nonumber\\
&\leq C \sum_{k=n-N+1}^{n}\left(1-e^{-sn^{-1/\theta}}\right)^{\theta}\nonumber\\
&\leq CN/n\rightarrow0, \quad n\rightarrow\infty.
\end{align}
On the other hand, by Theorem \ref{Lgamma2},
\begin{align}\label{s11}
S_{1}(s,n,\theta)=1-\gamma_{n-N}^{(0)}\left(e^{-sn^{-1/\theta}}\right)
\rightarrow 1-e^{-\kappa_{2}s^{\theta}},
\end{align}
as $n\rightarrow\infty$. Collecting \eqref{xi22}--\eqref{s11},  letting $n\to \infty$ first, and then $\varepsilon\downarrow0$,  we get
\begin{align*}
\Xi_{2}\big(e^{-sn^{-1/\theta}},n\big)\to 1-e^{-\kappa_{2}s^{\theta}}.
\end{align*}
Recalling \eqref{Wndecom} and \eqref{Xi1}, we end the proof.\qed

\textbf{Proof of Theorem \ref{thA4}.}

First, we  consider the asymptotic behavior of $\Xi_{1}\big(e^{-s(1-F_{n}(0))},n\big)$. Using \eqref{uns},
\begin{align*}
\Xi_{1}\big(e^{-s(1-F_{n}(0))},n\big)&=\frac{\kappa_{0}\gamma_{n}^{(0)}\left(e^{-s(1-F_{n}(0))}\right)}{u_{n}}\left(1-F_n\left(e^{-s(1-F_{n}(0))}\right)\right)^{\delta}\\
&\leq Cn^{\alpha}\mathcal{L}_{\ast}(n)^{-1}\left(1-F_{n}(0)\right)^{\delta}\\
&\leq Cn^{\alpha-\delta/\nu}\mathcal{L}_{\ast}(n)^{-1}.
\end{align*}

 (i) For $\sigma\ge  1-\delta/\nu$, according to Theorem \ref{thA2}, we have
 \begin{align}\label{5.3}
\Xi_{1}\big(e^{-s(1-F_{n}(0))},n\big)\to 0,\quad n\to\infty.\end{align}

(ii)  For $\sigma< 1-\delta/\nu$, by Theorem \ref{thA2} we have $u_n\sim K_5n^{-\delta/\nu}$. Using  Theorem~\ref{Lgamma1} and \eqref{Funi}, we can obtain
\begin{align}\label{Xi1big}
\Xi_{1}\big(e^{-s(1-F_{n}(0))},n\big)\rightarrow \frac{\kappa_{0}}{K_{5}}\left(\frac{1}{\kappa_{1}\nu}\right)^{\delta/\nu}
\frac{s^{\delta}}{\left(1+s^{\nu}\right)^{\sigma+\delta/\nu}},\quad n\to \infty.
\end{align}

Next, we consider  $\Xi_{2}\big(e^{-s(1-F_{n}(0))},n\big)$.

(a) $\sigma\geq 1$. In this case $\alpha=0$. For any $0<\lambda<1$, define the largest integer  that less than or equal to $\lambda n$ by $[\lambda n]$. Then
\begin{align}\label{xi2}
\Xi_{2}\big(e^{-s(1-F_{n}(0))},n\big)&=\sum_{k\leq [\lambda n]}\frac{u_{n-k}}{u_{n}}\left[\gamma_{k-1}^{(0)}\left(e^{-s(1-F_{n}(0))}\right)-\gamma_{k}^{(0)}\left(e^{-s(1-F_{n}(0))}\right)\right] \nonumber\\
&~~+\sum_{[\lambda n]<k\leq n}\frac{u_{n-k}}{u_{n}}\left[\gamma_{k-1}^{(0)}\left(e^{-s(1-F_{n}(0))}\right)-\gamma_{k}^{(0)}\left(e^{-s(1-F_{n}(0))}\right)\right] \nonumber\\
&:=S_{3}(s,n)+S_{4}(s,n).
\end{align}
By   $1-B(F_k(t))\leq  C(1-t)^{\theta} (t\in (0,1))$  and $\theta=\nu$,
\begin{align*}
S_{4}(s,n)
&\leq C\sum_{[\lambda n]<k\leq n}\frac{u_{n-k}}{u_{n}}\gamma_{k-1}^{(0)}\left(e^{-s(1-F_{n}(0))}\right)
\left(1-e^{-s(1-F_{n}(0))}\right)^{\theta}\\
&\le C \sum_{[\lambda n]<k\leq n}\frac{u_{n-k}}{u_{n}}
\left(1-e^{-s(1-F_{n}(0))}\right)^{\nu}\\
&\leq \frac{C}{nu_{n}}\sum_{k=1}^{n-[\lambda n]}u_{k}.
\end{align*}
Recalling \eqref{uns} and applying  Theorem \ref{thA2}, as $n\to \infty$,
\begin{align}\label{un10}
u_n\sim
\begin{cases}
K_1, & \sigma>1,\\
K_2\log ^{-1} n, & \sigma=1.\\
\end{cases}
\end{align}
Then
\begin{align*}
\limsup\limits_{n\rightarrow\infty}S_{4}(s,n)
\leq  C(1-\lambda).
\end{align*}
Moreover, noticing that the sequence $\{u_{n},n\geq 1\}$ is decreasing, we obtain
\begin{align*}
S_{3}(s,n)
&\leq \frac{u_{n-[\lambda n]}}{u_{n}}\sum_{k\leq [\lambda n] }\left[\gamma_{k-1}^{(0)}\left(e^{-s(1-F_{n}(0))}\right)-\gamma_{k}^{(0)}\left(e^{-s(1-F_{n}(0))}\right)\right] ,
\end{align*}
and
\begin{align*}
S_{3}(s,n)
&\geq \sum_{k\leq [\lambda n]}\left[\gamma_{k-1}^{(0)}\left(e^{-s(1-F_{n}(0))}\right)-\gamma_{k}^{(0)}\left(e^{-s(1-F_{n}(0))}\right)\right] .
\end{align*}
Applying  Theorem~\ref{Lgamma1}, as $n\rightarrow\infty$, we have
\begin{align*}
\sum_{k\leq [\lambda n]}\left[\gamma_{k-1}^{(0)}\left(e^{-s(1-F_{n}(0))}\right)-\gamma_{k}^{(0)}\left(e^{-s(1-F_{n}(0))}\right)\right]
\rightarrow 1-(1+\lambda s^{\nu})^{-\sigma}.
\end{align*}
For any  $0<\lambda<1$, by \eqref{un10} we have
\begin{align*}\frac{u_{n-[\lambda n]}}{u_{n}}  \rightarrow1,\quad n\rightarrow\infty.
\end{align*}
 Therefore, letting $n\rightarrow\infty$ first, and then $\lambda\uparrow1$ in (\ref{xi2}), we obtain
 \begin{align}\label{xi20}
\Xi_{2}\big(e^{-s(1-F_{n}(0))},n\big)
\to 1-(1+ s^{\nu})^{-\sigma}.\end{align}
Combining \eqref{Wndecom}, \eqref{5.3} and \eqref{xi20}, we complete the proof of (a).

(b) $\sigma<1$.  In this case, we use  a composition of $\Xi_{2}\big(e^{-s(1-F_{n}(0))},n\big)$ as follows: For any $\lambda\in(0,1/2)$,
\begin{align}\label{xi222}
\Xi_{2}\big(e^{-s(1-F_{n}(0))},n\big)&=\sum_{k\leq [\lambda n]}\frac{u_{n-k}}{u_{n}}\left[\gamma_{k-1}^{(0)}\left(e^{-s(1-F_{n}(0))}\right)-\gamma_{k}^{(0)}\left(e^{-s(1-F_{n}(0))}\right)\right] \nonumber\\
&~~+\sum_{[\lambda n]<k\leq [(1-\lambda)n]}\frac{u_{n-k}}{u_{n}}\left[\gamma_{k-1}^{(0)}\left(e^{-s(1-F_{n}(0))}\right)-\gamma_{k}^{(0)}\left(e^{-s(1-F_{n}(0))}\right)\right]\nonumber\\
&~~~+\sum_{[(1-\lambda) n]<k\leq n}\frac{u_{n-k}}{u_{n}}\left[\gamma_{k-1}^{(0)}\left(e^{-s(1-F_{n}(0))}\right)-\gamma_{k}^{(0)}\left(e^{-s(1-F_{n}(0))}\right)\right]\nonumber\\
&:=S_{5}(s,n)+S_{6}(s,n)+S_{7}(s,n).
\end{align}
First, recalling \eqref{uns} and applying  Theorem \ref{thA2}, as $n\to \infty$,
\begin{align}\label{un11}
u_n\sim
\begin{cases}
K_3n^{\sigma-1}, & \sigma+\frac{\delta}{\nu}>1,\\
K_4n^{\sigma-1}\log   n, & \sigma+\frac{\delta}{\nu}=1,\\
K_5n^{-\frac{\delta}{\nu}},& \sigma+\frac{\delta}{\nu}<1.
\end{cases}
\end{align}
As arguing for $S_{4}(s,n)$, we have
\begin{align*}
S_{5}(s,n)&=\sum_{k\leq [\lambda n]}\frac{u_{n-k}}{u_{n}}\gamma_{k-1}^{(0)}\left(e^{-s(1-F_{n}(0))}\right)
\left[1-B\left(F_{k-1}\big(e^{-s(1-F_{n}(0))}\big)\right)\right]\\
&\leq C\sum_{k\leq [\lambda n]}\frac{u_{n-k}}{u_{n}}
\left(1-e^{-s(1-F_{n}(0))}\right)^{\theta}\\
&\leq \frac{C}{nu_{n}}\sum_{k=n-[\lambda n]}^{n}u_{k},
\end{align*}
and
$$
\limsup\limits_{n\rightarrow\infty}S_{5}(s,n)\leq C[1-(1-\lambda)^{1-\alpha}].$$
Similarly,
\begin{align*}
\limsup\limits_{n\rightarrow\infty}S_{7}(s,n)\leq C\lambda^{1-\alpha}.
\end{align*}
Last, by (A2) and Taylor's expansion $$e^{-x}=1-x+ \frac{\xi_x}{2}x^2, \quad x,  \xi_x\in (0,1), $$ we have that
\begin{align*}
S_{6}(s,n)&=\sum_{[\lambda n]<k\leq [(1-\lambda) n]}\frac{u_{n-k}}{u_{n}}\gamma_{k-1}^{(0)}\left(e^{-s(1-F_{n}(0))}\right)
\left[1-B\left(F_{k-1}\left(e^{-s(1-F_{n}(0))}\right)\right)\right]\\
&=\sum_{[\lambda n]<k\leq [(1-\lambda) n]}\frac{u_{n-k}}{u_{n}}\left[\gamma_{k-1}^{(0)}\left(e^{-s(1-F_{n}(0))}\right)
-\left(1+\frac{k-1}{n}s^{\nu}\right)^{-\sigma}\right]
\left[1-B\left(F_{k-1}\left(e^{-s(1-F_{n}(0))}\right)\right)\right]\\
&~~~+\sum_{[\lambda n]<k\leq [(1-\lambda) n]}\frac{u_{n-k}}{u_{n}}\left(1+\frac{k-1}{n}s^{\nu}\right)^{-\sigma}\kappa_{2}\left(1-F_{k-1}\big(e^{-s(1-F_{n}(0))}\big)\right)^{\theta}
\\
&~~~~~~+\sum_{[\lambda n]<k\leq [(1-\lambda) n]}\frac{u_{n-k}}{u_{n}}\left(1+\frac{k-1}{n}s^{\nu}\right)^{-\sigma}
\varphi(s,k,n,\theta)\\
&:=\Delta_{1}(s,n,\theta)+\Delta_{2}(s,n,\theta)+\Delta_{3}(s,n,\theta),
\end{align*}
where
\begin{align*}
\varphi(s,k,n,\theta)\le C\left[1-F_{k-1}\left(e^{-s(1-F_{n}(0))}\right)\right]^{2\theta}.
\end{align*}
Then as $n\to \infty$, by $1-F_k(t)\le F_k'(1)(1-t) (t\in (0,1))$  and $\theta=\nu$,
\begin{align*}
|\Delta_{3}(s,n,\theta)|&\leq \sum_{[\lambda n]<k\leq [(1-\lambda) n]}\frac{u_{n-k}}{u_{n}}\left(1+\frac{k-1}{n}s^{\nu}\right)^{-\sigma}
|\varphi(s,k,n,\theta)|\\
&\leq C\sum_{[\lambda n]<k\leq [(1-\lambda) n]}\frac{u_{n-k}}{u_{n}}\left(1+\frac{k-1}{n}s^{\nu}\right)^{-\sigma}\big[1-F_{k-1}\big(e^{-s(1-F_{n}(0))}\big)\big]^{2\theta}\\
&\leq \frac{C}{n^{2}}\sum_{[\lambda n]<k\leq [(1-\lambda) n]}\frac{u_{n-k}}{u_{n}}\\
&\rightarrow0,
\end{align*}
where the last step is by \eqref{un11}. Applying  Theorem \ref{Lgamma1},
\begin{align*}
\limsup\limits_{n\rightarrow\infty}|\Delta_{1}(s,n,\theta)|=0.
\end{align*}
Recalling $\theta=\nu$ and applying Proposition~\ref{l2.2}, we obtain
\begin{align*}
\lim\limits_{n\rightarrow\infty}\Delta_{2}(s,n,\theta)=\sigma s^{\nu}
\int_{\lambda}^{1-\lambda}(1-x)^{-\alpha}(1+s^{\nu}x)^{-\sigma-1}dx.
\end{align*}
Letting $n\rightarrow\infty$ first, and then $\lambda\downarrow0$ in \eqref{xi222}, together with above discussions on $S_{5}(s,n)$, $S_{6}(s,n)$, $S_{7}(s,n)$, we have
\begin{align*}
\Xi_{2}\big(e^{-s(1-F_{n}(0))},n\big)\rightarrow \sigma s^{\nu}
\int_{0}^{1}(1-x)^{-\alpha}(1+s^{\nu}x)^{-\sigma-1}dx.
\end{align*}
Combining with \eqref{Wndecom}, \eqref{5.3} and \eqref{Xi1big}, we obtain the desired result. \qed
\bigskip

{\bf Acknowledgement.}  The first author is supported by China Postdoctoral Science Foundation (Grant No. 2020M680269). The second author is supported by National Natural Science Foundation of China (Grant No. 11871103).

\bigskip

\end{document}